\newtheorem{theorem}{Theorem}[section]
\newtheorem{lemma}[theorem]{Lemma}
\newtheorem{proposition}[theorem]{Proposition}
\newtheorem{corollary}[theorem]{Corollary}
\newtheorem{remark}[theorem]{Remark}
\newcommand{\BE}{\begin{equation}}
\newcommand{\BEN}{\begin{equation*}}
\newcommand{\EE}{\end{equation}}
\newcommand{\EEN}{\end{equation*}}
\newcommand{\BL}{\begin{lemma}}
\newcommand{\EL}{\end{lemma}}
\newcommand{\BT}{\begin{theorem}}
\newcommand{\ET}{\end{theorem}}
\newcommand{\BP}{\begin{proposition}}
\newcommand{\EP}{\end{proposition}}
\newcommand{\BC}{\begin{corollary}}
\newcommand{\EC}{\end{corollary}}
\newcommand{\BR}{\begin{remark}}
\newcommand{\ER}{\end{remark}}
\newcommand{\D}{\displaystyle}
\newcommand{\F}[2]{\frac{#1}{#2}}
\newcommand{\e}{\varepsilon}
\title{Infinitely many solutions for the prescribed boundary mean curvature problem in $\mathbb B^N$}
\author{Liping Wang\footnotemark[2] \qquad Chunyi Zhao\footnotemark[1]}
\begin{document}

\footnotetext[1]{Corresponding author. \\ \indent \ \ \
                 Department of Mathematics, East China Normal University, Shanghai, 200241, China. Email: cyzhao@math.ecnu.edu.cn.}
\footnotetext[2]{Department of Mathematics, East China Normal University, Shanghai, 200241, China. Email: lpwang@math.ecnu.edu.cn.}

\maketitle

\begin{abstract}
We consider the following prescribed boundary mean curvature problem in $ \mathbb B^N$ with the Euclidean metric
\BEN
\begin{cases}
\D -\Delta u  =0,\quad  u>0  &\text{in }\mathbb B^N, \\[2ex]
\D \F{\partial u}{\partial\nu} + \F{N-2}{2} u =\F{N-2}{2} \widetilde K(x) u^{2^\#-1} \quad & \text{on }\mathbb S^{N-1},
\end{cases}
\EEN where $\widetilde K(x)$ is positive and rotationally symmetric on $\mathbb
S^{N-1}, 2^\#=\F{2(N-1)}{N-2}$.
 We show that if $\widetilde K(x)$ has a local maximum point,  then the above problem has {\bf infinitely many positive} solutions,
 which are not rotationally symmetric on $\mathbb S^{N-1}$.
\end{abstract}

\noindent {\bf Keywords:} Infinitely many solutions; prescribed
boundary mean curvature; variational reduction

\section{Introduction}\label{s1}

Parallel to the prescribed scalar curvature problem, the prescribed
boundary mean curvature problem also plays an important role in
conformal geometry. Given an $N$-dimensional ($N\geq 3$) Riemannian
manifold $(M, g)$ with boundary, this problem concerns if one can
find a new metric $\tilde g$ in the conformal class of $g$, such
that $(M,\tilde g)$ has zero scalar curvature and the boundary mean
curvature becomes a prescribed function. Denote $\tilde g =
u^\F{4}{N-2} g$ where $u$ is a positive smooth function, then
problem may be addressed to finding a positive solution $u$ of the
coming equation \BEN
\begin{cases}
\D -\F{4(N-1)}{N-2}\Delta_g u + R_g u =0  &\text{in }M, \\[2ex]
\D \F{\partial u}{\partial\nu} + \F{N-2}{2}H_g u =\F{N-2}{2}\widetilde  K(x) u^{2^\#-1} \quad & \text{on }\partial M,
\end{cases}
\EEN
where $2^\#=\F{2(N-1)}{N-2}$ is the critical exponent of the Sobolev trace embedding. Here $\Delta_g$ is the Laplace-Beltrami operator,
$R_g$ is the scalar curvature of $M$,
$H_g$ is the mean curvature of $\partial M$, $\nu$ is the outward normal unit vector with respect to the
metric $g$ and $\widetilde K(x)$ is the prescribed function.\par

Due to the fact that the embedding $H^1(M)\hookrightarrow
L^{2^\#}(\partial M)$ is not compact, the Euler-Lagrange functional
$J$ associated to our problem fails to satisfy the Palais-Smale
condition. That is there exists noncompact sequence along which the
functional $J$ is bounded and its gradient goes to zero. Therefore,
it is not possible to apply the standard variational methods to
prove the existence of solutions. Notice that the above problem is a
natural analogue to the well-known scalar curvature problems on
closed manifolds.

Escobar \cite{E1,E3} and Marques \cite{M1,M2} studied this problem
for the case $\widetilde K(x)$ is a constant. From the existence of
solutions, they showed in this case that most compact manifolds with
boundary are conformally diffeomorphic to a manifold that resembles
the ball in two ways, namely, it has zero scalar curvature and its
boundary has constant mean curvature, although very few regions are
really conformal to the ball in higher dimensions. About other
related results we refer to the works \cite{A,C,E2} and the
references therein. \par


In this paper, we prescribe mean curvature on the boundary $\mathbb S^{N-1}$ of the unit ball $\mathbb B^N$ in
$\mathbb R^N$ ($N\geq 3$) with Euclidean metric $g_0$. Precisely we study the problem of finding a conformal
metric to $g_0$ whose scalar curvature vanishes in $\mathbb B^N$ and the mean curvature of boundary $\mathbb S^{N-1}$ is given by $\widetilde K(x)$.
This problem is equivalent to solving the following boundary problem
\BE\label{1.3}
\begin{cases}
\D -\Delta u  =0,\quad  u>0  &\text{in }\mathbb B^N, \\[2ex]
\D \F{\partial u}{\partial\nu} + \F{N-2}{2} u =\F{N-2}{2}\widetilde  K(x) u^{2^\#-1} \quad & \text{on }\mathbb S^{N-1},
\end{cases}
\EE Note that Cherrier \cite{CH} studied the regularity  for this
equation. He showed that solutions of (\ref{1.3}) which are of class
$H^1$ are also smooth.\par

The problem of determining which $\widetilde K(x)$ admits a solution to
(\ref{1.3}) has been studied extensively. It is easy to see that a
necessary condition for solving the problem is that $\widetilde K(x)$ has to be
positive somewhere. But there are also some obstructions for the
existence of  solutions, which are said of \textit{topological
type}. For example, the solution $u$ must satisfy the following
Kazdan-Warner condition (see \cite{E3}) \BE\label{1.2} \int_{\mathbb
S^{N-1}}\nabla\widetilde  K \cdot  x  u^{2^\#} \mathrm dx= 0. \EE Some
existence results have been obtained under some assumptions
involving the Laplacian at the critical points of $\widetilde K$. Sufficient
conditions in dimensions $3$ and $4$ are given in \cite{EG} and
\cite{DMA}. Furthermore in \cite{ACA}, the authors developed a Morse
theoretical approach to this problem in the $4$-dimensional case
providing some multiplicity results under generic conditions on the
function $\widetilde K$.

Consider the case $\widetilde K(x)=1+\e h(x)$ is a perturbation of $1$ (or
generally  a perturbation of some constant). In \cite{CXY}, by a
perturbation method, Chang, Xu and Yang obtained positive solutions
by looking for constrained minimizers, more precisely, they proved
that if at each critical point $Q$ of $h(x)$, $\Delta_{\mathbb
S^{N-1}}h(Q)= 0$, then under additional conditions, the above
problem has a positive solution for $\e$ sufficiently small.
Furthermore, Cao-Peng \cite{CP} constructed a two-peak solution
whose maximum points are located near two critical points of $h$ as
$\e\to 0$ under certain assumptions.\par

It is well known that the unit ball $\mathbb B^N$ is conformal to
the half-space $\mathbb R_+^N$. As in \cite{CP}, to consider this
problem we transfer the equation (\ref{1.3}) to an  equation in the
half-space $\mathbb R_+^N$. We denote
$y=(y_1,\cdots,y_N)=(y',y_N)\in \mathbb B^N $.  By the standard
stereographic projection: $\Pi$: $\mathbb B^N \to \mathbb R_+^N $,
\begin{align*}
\Pi(y',y_N) &= \left(\F{4y'}{(1+y_N)^2+|y'|^2},\F{2(1-y_N^2-|y'|^2)}{(1+y_N)^2+|y'|^2}\right),\\
\tilde u(x)&= \F{4^\F{N-2}{2}u(\Pi^{-1}
x)}{\left[(2+x_N)^2+|x'|^2\right]^\F{N-2}{2}},
\end{align*}
 we see that the function $\tilde u(x)$ satisfies
\BE\label{1.1}
\begin{cases}
\Delta u = 0,\ \ u>0   \qquad & \text{in }\mathbb R^N_+, \smallskip \\
\D \F{\partial u}{\partial \nu} = K(x) u^{2^\#-1}   & \text{on }\partial\mathbb R^N_+, \smallskip \\
u\in D^{1,2}(\mathbb R_+^N),
\end{cases}
\EE where $D^{1,2}(\mathbb R_+^N)$ denotes the completion of
$C_0^\infty(\overline{\mathbb R_+^N})$ under the norm $\int_{\mathbb
R_+^N} |\nabla u|^2$, the bounded function $K=\widetilde
K\circ\Pi^{-1}$.\par

For the case that $K(x)$ is a positive constant, say $1$ for
convenience, it is well-known from \cite{LZ}  that the only solution
to (\ref{1.1}) has the following form
$$
U_{\zeta,\Lambda}(x)=\left(N-2\right)^\frac{N-2}{2}\left[\frac{\Lambda
}{(1+\Lambda x_N)^2+\Lambda^2|\bar
x-\bar\zeta|^2}\right]^{\frac{N-2}{2}},
$$
where both $\Lambda>0$ and  $\bar\zeta\in \mathbb R^{N-1}$ are
arbitrary. Obviously it
 is  radially symmetric in $\partial\mathbb R_+^N$
with respect to $\bar \zeta$. Here we write $x=(\bar{x},x_N)$,
$\bar{x} \in \mathbb R^{N-1}$.

In this paper, we consider the simplest general case, i.e. $K(x)=K(|\bar
x|)=:K(r)$ is a radially symmetric \textit{positive} function in
$\partial\mathbb R_+^N$. The Kazdan-Warner condition (\ref{1.2}) is
correspondingly deduced to \BEN \int_{\mathbb R^{N-1}} K'(r) r
u^{2^\#} \mathrm d\bar x=\int_{\mathbb R^{N-1}} (\nabla K(\bar x)
\cdot \bar x)u^{2^\#} \mathrm d\bar x= 0. \EEN Hence by positiveness
of $u$, $K'(r)$ cannot have  fixed sign in $\mathbb R^{N-1}$. Thus
it is natural to assume that $K$ is \textit{not monotone}.\par

The purpose of this paper is to answer the following two questions:

\medskip

\noindent {\bf \it Q1: Does the existence of a local maximum of $K$
guarantee the existence of  solutions to (\ref{1.1})?}

\smallskip

\noindent
{\bf \it Q2: Are there solutions  to (\ref{1.1}) which are non-radially symmetric in $\partial\mathbb R_+^N$?}
\medskip

To state the main result, we assume that $K(r)$  satisfies the
following condition: \textit{$K(x)$ is positive, bounded and there
is a constant $r_0>0$, such that}
\[
K(r)= K(r_0)- c_0|r-r_0|^m+ O(|r-r_0|^{m+\theta})\quad \text{for }r\in
(r_0-\delta, r_0+\delta) \tag{K}  \label{K}
\]
where $c_0>0$, $\theta>0, \delta>0$ are some constants and the
constant $m$ satisfies  $m\in [2,N-2)$. To make sure that such $m$
exists, we consider the problem for $N \ge 5$. Without loss of
generality, we assume that
\[
K(r_0)=1.
\]

Our main result is stated as follows.

\BT\label{t1} Suppose that $N\ge 5$.  If $K(r)$ satisfies (\ref{K}),
then problem (\ref{1.1}) has infinitely many solutions, which is
non-radial in $\partial\mathbb R_+^N$. \ET

\BR Combining the results in  \cite{EG} and \cite{DMA}, we give
sufficient conditions for the existence of solutions for all $N \ge
3$.

\ER

\BR
The condition (\ref{K}) is a local condition while the condition in \cite{ACA} is global.
\ER

\begin{remark}
Theorem \ref{t1} exhibits a {\bf new phenomena} for the prescribed boundary mean curvature problem.
It suggests that if the critical points of $K$ are not isolated, new solutions to (\ref{1.1}) may bifurcate.
\end{remark}

We formulate the following conjecture in the general case.\par
\smallskip
\noindent {\it {\bf Conjecture:} If the set $ \{x\in\partial\mathbb
R_+^N: K(x)= \max_{x \in \partial\mathbb R_+^N} K(x) \}$ is an
 $\ell$-dimensional smooth manifold without
boundary, where $ 1\leq \ell  \leq N-2$. Then problem (\ref{1.1})
admits infinitely many positive solutions.}\par\medskip

Let us outline the main idea in the proof of Theorem \ref{t1}.
Let us fix a positive integer
\BEN
k\geq k_0
\EEN
where $k_0$ is a large integer, which is to be determined later. Set
\BEN
\mu=k^\F{N-2}{N-2-m}
\EEN
be the scaling parameter.


Using the transformation $u(y)\mapsto\mu^{-\F{N-2}{2}}u(\F{y}{\mu})$, we note that (\ref{1.1}) is equivalent to
\BE\label{8}
\begin{cases}
\Delta u = 0   & \text{in }\mathbb R^N_+, \smallskip \\
\D \F{\partial u}{\partial \nu} =  K(\F{|y|}{\mu}) u^{2^\#-1} \quad & \text{on }\partial\mathbb R^N_+.
\end{cases}
\EE

In the paper,  let \BEN
x_j=\left(r\cos\F{2(j-1)\pi}{k},r\sin\F{2(j-1)\pi}{k},0,\cdots,0\right),
\qquad j=1,\ldots,k, \EEN
then the approximation solution we choose
is \BEN W_{r,\Lambda}(y)= \sum_{j=1}^k U_{x_j,\Lambda}=\left(N-2\right)^\frac{N-2}{2}\sum_{j=1}^k
\left[\F{\Lambda}{(1+\Lambda y_N)^2+\Lambda^2|\bar y-\bar
x_j|^2}\right]^\F{N-2}{2}. \EEN We will find the solution with the
form $W_{r,\Lambda}+\phi$, furthermore $\phi$ has the following
symmetries
\begin{gather}
\phi(y_1,y_2,\cdots,y_{N-1},y_N)=\phi(y_1,-y_2,\cdots,-y_{N-1},y_N), \tag{i} \label{i}\\
\phi(y)=\phi(Q_ky), \qquad Q_k=
\left(
\begin{array}{c|c}
 \begin{array}{cc}
   \cos\F{2\pi}{k} & -\sin\F{2\pi}{k} \\[1ex]
   \sin\F{2\pi}{k} & \cos\F{2\pi}{k}
 \end{array}
  & \  {0} \\[2.5ex]
 \hline  \\[-1.5ex]
{ 0}  & \ I
\end{array}
\right),  \tag{ii}\label{ii}
\end{gather}
where $I$ denotes the $(N-2)\times(N-2)$ identical matrix. In the whole paper,
we always assume that
\begin{gather*}
r\in\left[\mu r_0-\F{1}{\mu^{\bar \theta}}, \mu r_0+\F{1}{\mu^{\bar
\theta}}\right], \qquad \qquad  L_0\leq \Lambda\leq L_1,
\end{gather*}
where $\bar \theta>0$ is a small number and $L_1> L_0>0$.

Theorem \ref{t1} is a direct consequence of the following theorem.

\BT\label{t2} Suppose $N\geq 5$. If $K$ satisfies (\ref{K}), then
there is an integer $k_0>0$ such that for any integer $k>k_0$,
problem (\ref{8}) has a solution $u_k$ of the form \BEN u_k =
W_{r_k,\Lambda_k} + \phi_k, \EEN where $\phi_k$ satisfies (\ref{i})
and (\ref{ii}). Moreover, as $k\to\infty$, $\|\phi_k\|_\infty \to
0$, $r_k\in\left[\mu r_0-\F{1}{\mu^{\bar \theta}}, \mu
r_0+\F{1}{\mu^{\bar \theta}}\right]$ and $L_0\leq \Lambda_k\leq
L_1$. \ET

\BR Changing back the solutions in Theorem \ref{t2}, we see that the
solutions to (\ref{1.3}) can blow up at arbitrarily large number of
points on $\mathbb S^{N-1}$. On the other hand, Escobar-Garcia
\cite{EG} shows that  when $N\geq 4$ and the function $K$ at its
critical points vanishes up to order $m$ with $m>N-2$, there is
actually at most one possible blow-up point. Thus our existence
result means that $m<N-2$ is almost sharp. \ER

We will use the finite reduction method introduced by Wei-Yan \cite{WY} to prove Theorem \ref{t2},
in which the authors use $k$, the number of the bubbles of
the solutions, as the parameter in the construction of bubbles solutions for (\ref{8}).
The main difficulty in constructing solution with $k$-bubbles is that we need to obtain a better
control of the error terms.
Since the maximum norm will not be affected by the number of the bubbles, we will carry out the reduction procedure in a space
with weighted maximum norm.\par

Our paper is organized as follows. In Section \ref{s2}, we get some
preliminary estimates. In Section \ref{s3}, we deal with the
corresponding linearized and nonlinear problems. In Section
\ref{s4}, we come to the variational reduction procedure. In Section
\ref{s5}, the proof of Theorem \ref{t2} is given. Finally we give
the energy expansion of the approximation solution and list some
useful estimates in the appendix Section \ref{s6}.\par

Throughout this paper, $C$ is a various generic constant independent of $k$ and $\mu$.\par
\medskip

\section{Preliminary Estimates} \label{s2}

In this section we will get some estimates for the posterior use.\par

Under the assumption that the solution $u=W_{r,\Lambda} + \phi$, it
is not difficult to check that $\phi$ should satisfy the following
equation \BE\label{5}
\begin{cases}
-\Delta \phi =0 & \text{in }\mathbb R^N_+ , \\
\D \F{\partial\phi}{\partial\nu} - (2^\#-1) K(\F{|y|}{\mu})W_{r,\Lambda}^{2^\#-2}\phi= - R(y) + N(\phi) \quad &\text{on }\partial\mathbb R^N_+ ,
\end{cases}
\EE
where the error term $R(y)$ and the nonlinear term $N(\phi)$ is defined by
\begin{align*}
R(y) &= \F{\partial W_{r,\Lambda}}{\partial\nu} - K(\F{|y|}{\mu})W_{r,\Lambda}^{2^\#-1} ,\\
N(\phi)&= K(\F{|y|}{\mu}) \left[(W_{r,\Lambda}+\phi)^{2^\#-1}-W_{r,\Lambda}^{2^\#-1} - (2^\#-1)W_{r,\Lambda}^{2^\#-2}\phi\right].
\end{align*}\par

In what follows, we use the following two important weighted norms
\begin{align*}
\|\phi\|_* &= \sup_{y\in\overline{\mathbb R_+^{N}}}\left(\sum_{j=1}^k \frac {1}{(1+| y- x_j|)^{\F{N}{2}-\F{m}{N-2}+\tau}}\right)^{-1}|\phi(y)|,\\
\|h\|_{**} &= \sup_{y\in\partial\mathbb R_+^{N}}\left(\sum_{j=1}^k
\frac {1}{(1+| y-
x_j|)^{\F{N+2}{2}-\F{m}{N-2}+\tau}}\right)^{-1}|h(y)|\\
&=\sup_{\bar y\in\mathbb R^{N-1}}\left(\sum_{j=1}^k \frac
{1}{(1+|\bar y-\bar
x_j|)^{\F{N+2}{2}-\F{m}{N-2}+\tau}}\right)^{-1}|h(y)|
\end{align*}
where $0<\tau<\frac{1}{2(N-2)}$ is a fixed small constant.

For the later purpose we need the following two lemmas.

\BL\label{l1.1}
It holds that, for some small $0<\sigma<\F{m}{N-2}(\F{m}{N-2}-\tau)$,
\BEN
\|R\|_{**} \leq  C\left(\F{1}{\mu} \right)^{\F{m }{2}+\sigma}.
\EEN
\EL

\begin{proof}
Define
\[
\Omega_j=\left\{\bar y\in\partial\mathbb R_+^N ~\Big| ~ \bar y=(\bar
y', \bar y'')\in\mathbb R^2\times\mathbb R^{N-3},\ \left\langle
\frac {\bar y'}{|\bar y'|}, \frac{\bar x_j}{|\bar
x_j|}\right\rangle\ge \cos\frac{\pi}{k}\right\}.
\]
We have
\[
\begin{split}
R(\bar y)=&\ K\bigl(\frac {|\bar y|}\mu\bigr)\left(
W_{r,\Lambda}^{2^\#-1}-\sum_{j=1}^k U_{x_j,\Lambda}^{2^\#-1}\right)+\sum_{j=1}^k U_{x_j,\Lambda}^{2^\#-1}\left(
K\bigl(\frac {|\bar y|}\mu\bigr)-1\right)\\
:=&\ J_1+J_2.
\end{split}
\]
From the symmetry, we assume that $\bar y\in\Omega_1$. Then Taylor's theorem gives us
\begin{equation}\label{1-l2-5-3}
|J_1|\le \frac{C}{(1+|\bar y-\bar x_1|)^2}\sum_{j=2}^k \frac1{(1+|\bar y-\bar x_j|)^{N-2}}+C\Bigl(
\sum_{j=2}^k \frac1{(1+|\bar y-\bar x_j|)^{N-2}}\Bigr)^{2^\#-1}.
\end{equation}
Since $|\bar y-\bar x_j|\geq |\bar y-\bar x_1|$ and $|\bar y-\bar x_j|\geq \F{1}{2}|\bar x_j-\bar x_1|$ for $\bar y\in\Omega_1$, we obtain
\begin{equation*}\label{2-l2-5-3}
\begin{split}
&\frac1{(1+|\bar y-\bar x_1|)^2} \frac1{(1+|\bar y-\bar x_j|)^{N-2}} \\
\le &\ C\frac1{(1+|\bar y-\bar x_1|)^2}\frac{1}{(1+|\bar y-\bar x_j|)^{N-2-\alpha}}\frac{1}{(1+|\bar y-\bar x_j|)^{\alpha}}\\
\le&\ C\frac1{|\bar x_j-\bar x_1|^\alpha}\frac1{(1+|\bar y-\bar x_1|)^{N-\alpha} }.  \qquad (0 \leq \alpha \leq N-2)
\end{split}
\end{equation*}
Thus, for any $1 < \alpha \leq N-2$,
\begin{equation}\label{3-l2-5-3}
\frac1{(1+|\bar y-\bar x_1|)^2} \sum_{j=2}^k \frac1{(1+|\bar y-\bar x_j|)^{N-2}}\leq  \frac{C}{(1+|\bar y-\bar x_1|)^{N-\alpha}}\bigl(\frac k\mu\bigr)^\alpha.
\end{equation}
Take $\alpha = \frac{N-2}{2} + \frac{m}{N-2}-\tau \in (1, N-2]$ in
(\ref{3-l2-5-3}), then
\[
\frac1{(1+|\bar y-\bar x_1|)^2} \sum_{j=2}^k \frac1{(1+|\bar y-\bar
x_j|)^{N-2}}\leq  \frac{C}{(1+|\bar y-\bar x_1|)^{\frac{N+2}{2} -
\frac{m}{N-2}+\tau}}\bigl(\frac 1\mu\bigr)^{\frac{m}{2} + \sigma}.
\]

Similarly, for $\bar y\in\Omega_1$ and any $1 < \alpha \leq N-2$, we again have
\[
\sum_{j=2}^k \frac1{(1+|\bar y-\bar x_j|)^{N-2}}\le  \frac{C}{(1+|\bar y-\bar x_1|)^{N-2-\alpha}}\left(\F{k}{\mu}\right)^\alpha.
\]
Now we choose $\alpha=\F{N-2}{N}(\F{N-2}{2}+\F{m}{N-2}-\tau)$. It's
easy to verify that
\[
\alpha-1 > \frac{(N-2)^2+4-2(N-2)\tau-2N}{2N}   \ge 0,
 \]
 and
 \[
\alpha <\F{N-2}{N}(\F{N-2}{2}+\F{m}{N-2})\le\F{N-2}{N}\cdot\F{N}{2}<
N-2
 \]
since $\tau <\F{1}{2(N-2)}$.

Note also that \BEN
\F{Nm\alpha}{(N-2)^2}=\F{m}{2}+\F{m^2}{(N-2)^2}-\F{m\tau}{N-2} \geq
\F{m}{2} +\sigma \EEN owing to $\tau < \F{1}{2(N-2)}$. Thus we can
directly check that
\begin{eqnarray*}
\left(\sum_{j=2}^k \frac1{(1+|\bar y-\bar x_j|)^{N-2}}\right)^{2^\#-1}&=& \frac C{(1+|\bar y-\bar x_1|)^{N-\F{N\alpha}{N-2}}} \left(\F{1}{\mu}\right)^\F{Nm\alpha}{(N-2)^2}\\
&\leq& \frac C{(1+|\bar y-\bar x_1|)^{\F{N+2}{2}-\F{m}{N-2}+\tau}}
\left(\F{1}{\mu}\right)^{\F{m}{2}+\sigma}.
\end{eqnarray*}
The same estimates obviously hold for (\ref{3-l2-5-3}). Thus, we
proved that
\[
\|J_1\|_{**}\le C\left(\frac 1\mu\right)^{\F{m}{2}+\sigma} .
\]

Now, we estimate  $J_2$.  For $\bar y\in \Omega_1$ and $j>1$, similarly $|\bar y-\bar x_j|\geq \F{1}{2}|\bar x_j-\bar x_1|$ indicates that,
for $0\leq\alpha\leq N$,
\[
U_{x_j,\Lambda}^{2^\#-1}(y)\le \frac{C}{(1+|\bar y-\bar x_1|)^{N-\alpha}}\frac{1}{|\bar x_1-\bar x_j|^{\alpha}},
\]
which implies that, for $\alpha=\F{N-2}{2}+\F{m}{N-2}-\tau>1$,
\begin{equation}\label{10-l2-5-3}
\left|\sum_{j=2}^k \Bigl(K\bigl(\frac {|\bar y|}\mu\bigr)-1\Bigr)U_{x_j,\Lambda}^{2^\#-1}\right|
\le \frac C{(1+|\bar y-\bar x_1|)^{\F{N+2}{2}-\F{m}{N-2}+\tau}} \left(\F{1}{\mu}\right)^{\F{m}{2}+\sigma}.
\end{equation}
For $\bar y\in \Omega_1$ and $\left||\bar y|-\mu r_0\right|\ge
\delta \mu$ where $\delta>0$ is a fixed constant, then
\[
\left||\bar y|-|\bar x_1|\right|\ge \left||\bar y|-\mu
r_0\right|-\left||\bar x_1|-\mu r_0\right|\ge \frac12 \delta \mu.
\]
As a result, for any $0\leq\alpha\leq N$,
\begin{align}
\left|U_{x_1,\Lambda}^{2^\#-1}\Bigl(K\bigl(\frac {|\bar y|}\mu\bigr)-1\Bigr)\right|
&\le  \frac{C}{(1+|\bar y-\bar x_1|)^{N-\alpha}}\frac1{\mu^{\alpha}} \nonumber \\
& \leq \F{C}{(1+|\bar y-\bar x_1|)^{N-\F{m}{2}-\sigma}}\left(\F{1}{\mu}\right)^{\F{m}{2}+\sigma} \nonumber\\
& \leq \F{C}{(1+|\bar y-\bar x_1|)^{\F{N+2}{2}-\F{m}{N-2}+\tau}}\left(\F{1}{\mu}\right)^{\F{m}{2}+\sigma}\label{11-l2-5-3}.
\end{align}
If  $\bar y\in \Omega_1$ and $||\bar y|-\mu r_0|\le \delta \mu$, then
\[
\begin{split}
\left|K\bigl(\frac {|\bar y|}\mu\bigr)-1\right| &\le C\left|\frac {|\bar y|}\mu-r_0\right|^m \le  \frac{C}{\mu^m} \Bigl((||\bar y|-|\bar x_1||)^m+||\bar x_1|-\mu r_0|)^{m}\Bigr) \\
&\le  \frac{C}{\mu^m} ||\bar y|-|\bar x_1||^m+\frac{C}{\mu^{m+\bar\theta}}.
\end{split}
\]
and
\[
||\bar y|-|\bar x_1||\le ||\bar y|-\mu r_0|+|\mu  r_0-|\bar x_1||\le 2\delta \mu.
\]
Consequently it holds that, for any $0\leq \alpha\leq m$,
\[
\begin{split}
&\ \frac{ ||\bar y|-|\bar x_1||^m }{\mu^{m}}\frac1{(1+|\bar y-\bar x_1|)^{N}}\\
= &\ \frac1{\mu^{\alpha}}\frac1{(1+|\bar y-\bar x_1|)^{N-\alpha}}
\frac{ ||\bar y|-|\bar x_1||^m }{\mu^{ m -\alpha}}\frac1{(1+|\bar y-\bar x_1|)^{\alpha}}\\
\le &\ \frac C{\mu^{\alpha}}
\frac1{(1+|\bar y-\bar x_1|)^{N-\alpha}}
\frac{ ||\bar y|-|\bar x_1||^{\alpha} }{(1+|\bar y-\bar x_1|)^{\alpha}}\\
\le &\ \frac C{\mu^{\alpha}} \frac1{(1+|\bar y-\bar
x_1|)^{N-\alpha}},
\end{split}
\]
and
\[
\frac{C}{\mu^{m+\bar\theta}} \frac1{(1+|\bar y-\bar x_1|)^{N}} \le \
\frac C{\mu^{\alpha}} \frac1{(1+|\bar y-\bar x_1|)^{N-\alpha}}.
\]

Thus we obtain, for $||\bar y|-\mu r_0|\le \delta \mu$ and  $\alpha
=\F{m}{2}+\sigma$, that
\begin{equation}\label{13-l2-5-3}
\left|U_{x_1,\Lambda}^{2^\#-1}\left( K\left(\frac {|\bar y|}\mu\right)-1\right)\right|
\le  \F{C}{(1+|\bar y-\bar x_1|)^{\F{N+2}{2}-\F{m}{N-2}+\tau}}\left(\F{1}{\mu}\right)^{\F{m}{2}+\sigma}.
\end{equation}
 Combining \eqref{10-l2-5-3}, \eqref{11-l2-5-3} and
 \eqref{13-l2-5-3}, we reach that
\[
\|J_2\|_{**}\leq \left(\frac 1\mu\right)^{\F{m}{2}+\sigma} .
\]
\par

The lemma is concluded.
\end{proof}

\BL\label{l1.2} We have \BEN \|N(\phi)\|_{**}\leq C
\|\phi\|_*^{2^\#-1}. \EEN \EL

\begin{proof}
Obviously, it holds from Taylor's theorem that
\BEN |N(\phi)| \leq
C|\phi|^{2^\#-1} \qquad \text{since }N\geq 5>4.
\EEN

Using the inequality
\[
\sum\limits_{j=1}^k a_jb_j \leq \bigg(\sum\limits_{j=1}^k
a_j^p\bigg)^\frac1p\bigg(\sum\limits_{j=1}^k b_j^q\bigg)^\frac1q \qquad
\text{for } \frac1p + \frac1q =1,\ a_j, b_j \geq 0.
\]
 we have that
\begin{align}
 |N(\phi)| &\le C\|\phi\|_*^{2^\#-1}\left(
\sum_{j=1}^k \frac1{(1+| y-  x_j|)^{\frac{N}2-\F{m}{N-2}+\tau}}\right)^{2^\#-1} \nonumber \\
&\le   C\|\phi\|_*^{2^\#-1}\left(
\sum_{j=1}^k \frac1{(1+|\bar y- \bar x_j|)^{\frac{N}2-\F{m}{N-2}+\tau}}\right)^{2^\#-1} \nonumber \\
&\leq  C\|\phi\|_*^{2^\#-1}  \sum_{j=1}^k \frac1{(1+|\bar y-\bar
x_j|)^{\frac{N+2}{2}-\F{m}{N-2}+ \tau}}\left(\sum_{j=1}^k
\frac1{(1+|\bar y-\bar x_j|)^{\frac{N-2-m}{N-2}+
\tau}}\right)^{\frac{2}{N-2}} \nonumber \\
&\leq  C\|\phi\|_*^{2^\#-1}  \sum_{j=1}^k \frac1{(1+|\bar y-\bar
x_j|)^{\frac{N+2}{2}-\F{m}{N-2}+ \tau}},\label{1-l-1-5-3}
\end{align}
since without loss of generality we may assume that $\bar y \in
\Omega_1$, then
\begin{align*}
 \sum\limits_{j=1}^k \frac1{(1+|\bar y-\bar
x_j|)^{\frac{N-2-m}{N-2}+ \tau}} &\le C + \sum_{j=2}^k \frac1{|\bar
x_1-\bar
x_j|^{\frac{N-2-m}{N-2}+ \tau}} \\
&\le C + \frac{k}{\mu^{\frac{N-2-m}{N-2}+ \tau}} \le C.
\end{align*}

The lemma is concluded.
\end{proof}

\section{Linearized and nonlinear problem}\label{s3}

To solve (\ref{5}), we in this section consider the following
intermediate nonlinear problem \BE\label{lin}
\begin{cases}
\D -\Delta \phi_k =0  & \text{in }\mathbb R_+^N, \\
\D\F{\partial\phi_k}{\partial\nu}- (2^\#-1) K(\F{|y|}{\mu})W_{r,\Lambda}^{2^\#-2}\phi_k= R_k + N(\phi_k) + \sum_{j=1}^2 c_{j}\sum_{i=1}^k U_{x_i,\Lambda}^{2^\#-2}Z_{i,j}  &\text{on }\partial\mathbb R_+^N,\\
\D \phi_k \text{ satisfies (\ref{i}) and (\ref{ii})}, \smallskip \\
\D \left\langle U_{x_i,\Lambda}^{2^\#-2}Z_{i, j},\phi_k \right\rangle=0  \qquad\qquad  i=1,\cdots,k,\; j=1,2,
\end{cases}
\EE for some numbers $c_{j}$, where $\langle
u,v\rangle=\int_{\partial\mathbb R_+^N}uv$ and
\begin{align*}
Z_{i,1}&=\frac{\partial U_{x_i,\Lambda}}{\partial r}=U_{x_i,\Lambda}\F{(N-2)\Lambda^2 (\bar y -\bar x_i)}{(1+\Lambda y_N)^2+\Lambda^2|\bar y-\bar x_i|^2} \cdot \F{\bar x_i}{r}, \\
Z_{i,2}&=\frac{\partial U_{x_i,\Lambda}}{\partial \Lambda}=U_{x_i,\Lambda}\F{N-2}{2\Lambda}\cdot\F{1-\Lambda^2 y_N^2-\Lambda^2|\bar y-\bar x_i|^2}{(1+\Lambda y_N)^2+\Lambda^2|\bar y-\bar x_i|^2}.
\end{align*}

Let us remark that in general we   should also  include  the
translational derivatives of $W_{r, \Lambda}$ in the right hand side
of (\ref{lin}). However due to the symmetry assumption on $\phi$,
this part of kernel
 automatically disappears. This is the main reason for imposing the symmetries (\ref{i}) and (\ref{ii}).

Then the following proposition holds.

\begin{proposition}\label{p1-6-3}
There is an integer $k_0>0$, such that for each $k\ge k_0$, $L_0\le
\Lambda\le L_1$, $|r-\mu r_0|\leq \frac1{\mu^{\bar\theta}}$, where
$\bar\theta>0$ is a fixed small constant, (\ref{lin}) has  a unique
solution $\phi=\phi(r,\Lambda)$, satisfying
\[
\|\phi\|_{*}\le C\bigl(\frac 1\mu\bigr)^{\frac {m}2+\sigma},\qquad
|c_j|\le C\bigl(\frac 1\mu\bigr)^{\frac {m}2+\sigma}, \quad j=1,2.
\]
\end{proposition}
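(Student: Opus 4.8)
The plan is to solve \eqref{lin} by a standard linear-theory-plus-contraction-mapping argument, carried out in the weighted norms $\|\cdot\|_*$ and $\|\cdot\|_{**}$. First I would address the purely linear problem: given $h$ with $\|h\|_{**}<\infty$ satisfying the symmetries \eqref{i}--\eqref{ii} and the orthogonality $\langle U_{x_i,\Lambda}^{2^\#-2}Z_{i,j},h\rangle=0$, one seeks $\phi$ (also satisfying \eqref{i}--\eqref{ii} and the orthogonality) solving
\[
\begin{cases}
-\Delta\phi=0 & \text{in }\mathbb R^N_+,\\
\D\F{\partial\phi}{\partial\nu}-(2^\#-1)K(\F{|y|}{\mu})W_{r,\Lambda}^{2^\#-2}\phi=h+\sum_{j=1}^2 c_j\sum_{i=1}^k U_{x_i,\Lambda}^{2^\#-2}Z_{i,j} & \text{on }\partial\mathbb R^N_+,
\end{cases}
\]
together with the a priori estimate $\|\phi\|_*\le C\|h\|_{**}$ with $C$ independent of $k$. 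The key point here is the uniform (in $k$) invertibility: I would prove it by contradiction, assuming there are sequences $k\to\infty$, $\phi_k$ with $\|\phi_k\|_*=1$, and right-hand sides $h_k$ with $\|h_k\|_{**}\to 0$. Testing the equation against $Z_{i,j}$ one shows $c_j\to 0$; then, using the weighted norm and the elliptic/harmonic-extension estimates (Green's function for the half-space), one localizes near a single bubble $x_1$, rescales $\tilde\phi_k(y)=\phi_k(y-x_1)$, and extracts a limit solving the linearized equation about the standard bubble $U_{0,\Lambda}$ on $\mathbb R^N_+$. By the nondegeneracy result of \cite{LZ} the only bounded solutions are spanned by $Z_{0,1},Z_{0,2}$ and the translational modes; the symmetries \eqref{i}--\eqref{ii} kill the translations, and the orthogonality conditions pass to the limit to kill $Z_{0,1},Z_{0,2}$, forcing $\tilde\phi\equiv 0$. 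A separate argument away from the bubbles (using that $W_{r,\Lambda}^{2^\#-2}$ is small there, so the operator is essentially $\partial_\nu$ on a harmonic function, which obeys a maximum principle) shows $\|\phi_k\|_*$ cannot concentrate there either, contradicting $\|\phi_k\|_*=1$. The necessary interaction estimates between bubbles — that $\sum_{j\neq 1}(1+|\bar y-\bar x_j|)^{-(N-2)}$ is controlled in the $*$-norm — are exactly the kind already used in Lemmas \ref{l1.1} and \ref{l1.2}, the cross-term estimate $k/\mu\to 0$ being what makes the constant uniform.

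With the linear theory in hand, I would set up \eqref{lin} as a fixed point: define $T_k$ to be the solution operator of the linear problem above, and look for $\phi$ solving $\phi=\mathcal A(\phi):=T_k\big(R_k+N(\phi)\big)$ in the ball $\mathcal B=\{\phi:\|\phi\|_*\le \rho\,(1/\mu)^{m/2+\sigma}\}$ (respecting the symmetries and orthogonality), for a suitable large constant $\rho$. By Lemma \ref{l1.1}, $\|R_k\|_{**}\le C(1/\mu)^{m/2+\sigma}$, and by Lemma \ref{l1.2}, $\|N(\phi)\|_{**}\le C\|\phi\|_*^{2^\#-1}$; since $2^\#-1>1$ and $(1/\mu)\to 0$, on $\mathcal B$ the nonlinear contribution is of order $(1/\mu)^{(2^\#-1)(m/2+\sigma)}$, negligible compared with $(1/\mu)^{m/2+\sigma}$, so $\mathcal A$ maps $\mathcal B$ into itself once $k$ (hence $\mu$) is large. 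For the contraction property one needs a Lipschitz estimate on $N$, namely $\|N(\phi_1)-N(\phi_2)\|_{**}\le C(\|\phi_1\|_*+\|\phi_2\|_*)^{2^\#-2}\|\phi_1-\phi_2\|_*$, proved by the mean value theorem together with the same Hölder-type bookkeeping as in Lemma \ref{l1.2}; on $\mathcal B$ the prefactor is $o(1)$, so $\mathcal A$ is a contraction. The Banach fixed point theorem then gives a unique $\phi=\phi(r,\Lambda)\in\mathcal B$, and applying the linear a priori estimate once more yields $\|\phi\|_*\le C(1/\mu)^{m/2+\sigma}$ and $|c_j|\le C(1/\mu)^{m/2+\sigma}$.

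The main obstacle is the uniform-in-$k$ linear estimate $\|\phi\|_*\le C\|h\|_{**}$: because the number of bubbles grows, one cannot afford any constant that degrades with $k$, so the blow-up/contradiction argument must be run carefully, controlling both the behavior near each individual $x_i$ (where the rescaled limit and nondegeneracy enter) and the region far from all bubbles, and checking that the multi-bubble interaction sums are genuinely $O(1)$ — or better, $o(1)$ where needed — in the chosen weighted norms. The exponents in $\|\cdot\|_*$ and $\|\cdot\|_{**}$ (with the shift $\tfrac{m}{N-2}$ and the small $\tau$) are tuned precisely so that these sums converge uniformly; verifying that the same choice is simultaneously compatible with the linear estimate, with $R_k$ small (Lemma \ref{l1.1}), and with the nonlinear estimates (Lemma \ref{l1.2}) is the delicate bookkeeping underlying the whole scheme. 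A secondary technical point is justifying the $L^\infty$-type (as opposed to $H^1$) theory on the half-space via the Green representation and a suitable maximum principle, so that the weighted sup-norms are the right functional setting; this is where Cherrier's regularity \cite{CH} and standard harmonic-extension estimates are invoked.
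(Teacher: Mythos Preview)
Your proposal is correct and follows essentially the same approach as the paper: first establish the uniform-in-$k$ linear solvability via a blow-up contradiction argument (this is the content of Lemma~\ref{l1} and Proposition~\ref{p1}), then run a contraction mapping in a ball of radius $\sim(1/\mu)^{m/2+\sigma}$ using Lemmas~\ref{l1.1} and~\ref{l1.2}. One small slip: you do not need (and the paper does not impose) the orthogonality condition on the datum $h$ in the linear problem --- the free constants $c_j$ are there precisely to absorb the components of $h$ along the approximate kernel, so the linear operator $T_k$ is defined for arbitrary $h$ with $\|h\|_{**}<\infty$; this matters since $R_k+N(\phi)$ is not a priori orthogonal to the $Z_{i,j}$.
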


In order to obtain Proposition \ref{p1-6-3}, we first consider the corresponding linearized problem
\BE\label{3.1}
\begin{cases}
\D -\Delta \phi_k =0  & \text{in }\mathbb R_+^N, \\
\D\F{\partial\phi_k}{\partial\nu}- (2^\#-1) K(\F{|y|}{\mu})W_{r,\Lambda}^{2^\#-2}\phi_k= h + \sum_{j=1}^2 c_{j}\sum_{i=1}^k U_{x_i,\Lambda}^{2^\#-2}Z_{i,j}  &\text{on }\partial\mathbb R_+^N,\\
\D \phi_k \text{ satisfies (\ref{i}) and (\ref{ii})}, \smallskip \\
\D \left\langle U_{x_i,\Lambda}^{2^\#-2}Z_{i, j},\phi_k \right\rangle=0  \qquad\qquad  i=1,\cdots,k,\; j=1,2.
\end{cases}
\EE

For any fixed $y=(y_1,\cdots,y_N)\in\mathbb R_+^N$, we denote $G(x,y)$ the Green's function of the problem
\BEN
\begin{cases}
  -\Delta G(x,y) = \delta_y \qquad&\text{for }x\in\mathbb R_+^N, \smallskip \\
  G(x,y)=0 &\text{for }|x|\to\infty, \medskip \\
  \D\F{\partial G}{\partial \nu}(x,y) = 0 & \text{for }x_N=0.
\end{cases}
\EEN It is not difficult to check that \BEN
G(x,y)=\F{1}{\omega_N(N-2)}\left(\F{1}{|x-y|^{N-2}} +
\F{1}{|x-y^s|^{N-2}}\right) \EEN where $\omega_N$ is the volume of
the unit ball in $\mathbb R^N$, $y^s$ is the symmetric point of $y$
with respect to $\partial\mathbb R_+^N = \{x:\ x_N=0\}$, i.e. \BEN
y^s = (\bar y,-y_N). \EEN

\begin{lemma}\label{l1}
Assume that $\phi_k$ solves (\ref{3.1}) for $h=h_{k}$. If $\Vert
h_{k}\Vert_{**}$ goes to zero as $k$ goes to infinity, so does
$\Vert \phi_k\Vert_{*}$.
\end{lemma}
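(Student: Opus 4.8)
The plan is to argue by contradiction, following the standard blow-up/compactness scheme adapted to the weighted norms. Suppose the conclusion fails: there exist sequences $k\to\infty$, parameters $\Lambda_k\in[L_0,L_1]$, $r_k$ with $|r_k-\mu r_0|\le \mu^{-\bar\theta}$, data $h_k$ with $\|h_k\|_{**}\to 0$, associated constants $c_{j,k}$, and solutions $\phi_k$ of (\ref{3.1}) with, after normalization, $\|\phi_k\|_*=1$. The first step is to get rid of the Lagrange-multiplier terms: testing the boundary equation in (\ref{3.1}) against $U_{x_\ell,\Lambda}^{2^\#-2}Z_{\ell,j}$, using the orthogonality conditions $\langle U_{x_i,\Lambda}^{2^\#-2}Z_{i,j},\phi_k\rangle=0$, the near-orthogonality of the functions $U_{x_i,\Lambda}^{2^\#-2}Z_{i,j}$ for distinct $(i,j)$ (which holds up to small errors because the bubbles are far apart, being separated by distance of order $\mu/k$), and the decay estimates collected in the appendix, one solves the resulting almost-diagonal linear system to obtain $|c_{j,k}|\le C(\|h_k\|_{**}+o(1)\|\phi_k\|_*)=o(1)$.

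Next I would establish the pointwise representation. Since $\phi_k$ is harmonic in $\mathbb R^N_+$ and satisfies a Neumann-type boundary condition, the Green's representation gives, for $y\in\overline{\mathbb R^N_+}$,
\[
\phi_k(y)=\int_{\partial\mathbb R^N_+}G(y,z)\Bigl[(2^\#-1)K(\tfrac{|z|}{\mu})W_{r_k,\Lambda_k}^{2^\#-2}\phi_k+h_k+\sum_{j}c_{j,k}\sum_i U_{x_i,\Lambda_k}^{2^\#-2}Z_{i,j}\Bigr]\,dz .
\]
One then feeds in $|\phi_k(z)|\le \|\phi_k\|_*\sum_j(1+|z-x_j|)^{-(N/2-m/(N-2)+\tau)}$, the bounds $\|h_k\|_{**}\to 0$ and $|c_{j,k}|=o(1)$, the fact that $W_{r_k,\Lambda_k}^{2^\#-2}$ behaves like $\sum_j(1+|z-x_j|)^{-2}$ near the bubbles, and the convolution estimates from the appendix (of the type $\int_{\mathbb R^{N-1}}|y-z|^{-(N-2)}(1+|z-x_i|)^{-a}\,dz\lesssim (1+|y-x_i|)^{-(a-1)}$ for suitable ranges of $a$). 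This yields
\[
|\phi_k(y)|\le o(1)\sum_{j=1}^k\frac{1}{(1+|y-x_j|)^{\frac N2-\frac m{N-2}+\tau}}+C\varepsilon_0\|\phi_k\|_*\sum_{j=1}^k\frac{1}{(1+|y-x_j|)^{\frac N2-\frac m{N-2}+\tau}}
\]
away from the bubble cores, where $\varepsilon_0$ can be made small by working outside balls $B_R(x_j)$ with $R$ large (using that $W_{r_k,\Lambda_k}^{2^\#-2}$ has small $*$-type weight outside these balls); this is where the condition $\tau<\tfrac1{2(N-2)}$ and $m<N-2$ enter to keep all exponents in the admissible range of the convolution lemmas.

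The remaining, and main, difficulty is to control $\phi_k$ \emph{inside} the fixed balls $B_R(x_j)$: there the potential $W^{2^\#-2}$ is not small, so the above contraction argument does not close directly. Here I would use the standard rescaling: set $\tilde\phi_k^{(j)}(\xi)=\phi_k(x_j+\xi)$; by the symmetries (\ref{i}),(\ref{ii}) it suffices to treat $j=1$. If $\sup_{B_R(x_1)}|\phi_k|$ does not tend to zero, then along a subsequence $\tilde\phi_k^{(1)}$ converges in $C_{loc}$ to a bounded nontrivial solution $\phi_\infty$ of the linearized equation at the standard bubble $U_{0,\Lambda_\infty}$ on $\mathbb R^N_+$, namely $-\Delta\phi_\infty=0$ in $\mathbb R^N_+$ and $\partial_\nu\phi_\infty=(2^\#-1)U_{0,\Lambda_\infty}^{2^\#-2}\phi_\infty$ on $\partial\mathbb R^N_+$, with $\phi_\infty$ decaying and satisfying the inherited symmetry and orthogonality relations $\langle U_{0,\Lambda_\infty}^{2^\#-2}Z_{\cdot,j},\phi_\infty\rangle=0$. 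By the nondegeneracy of the bubble for the trace problem (the kernel of this linearized operator is spanned exactly by the translations $\partial_{\bar x}U$ and the dilation $\partial_\Lambda U=Z_{\cdot,2}$), the symmetry kills the translation directions and the orthogonality kills $Z_{\cdot,2}$ (and $Z_{\cdot,1}$), forcing $\phi_\infty\equiv 0$, a contradiction with $\sup|\tilde\phi_k^{(1)}|\not\to 0$. Combining the interior and exterior estimates gives $\|\phi_k\|_*=o(1)$, contradicting $\|\phi_k\|_*=1$, and the lemma follows. The delicate bookkeeping is entirely in the convolution estimates — making sure the sum over the $k$ bubbles produces a genuinely small constant and not merely a bounded one — which is exactly why the weighted $\|\cdot\|_*$, $\|\cdot\|_{**}$ norms with the precise exponents $\tfrac N2-\tfrac m{N-2}+\tau$ and $\tfrac{N+2}2-\tfrac m{N-2}+\tau$ were chosen.
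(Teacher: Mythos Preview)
Your proof is correct and follows essentially the same contradiction scheme as the paper: normalize $\|\phi_k\|_*=1$, show $c_j=o(1)$, use the Green's-function representation together with the convolution lemmas from the appendix for the region $\{|y-x_j|\ge R\}$, and handle $\{|y-x_j|\le R\}$ by a blow-up limit and nondegeneracy of the single bubble. The only notable difference is that the paper tests the equation against the harmonic function $Z_{1,\ell}$ (so that Green's formula turns $\int Z_{1,\ell}\,\partial_\nu\phi$ into $\int\phi\,\partial_\nu Z_{1,\ell}=(2^\#-1)\int U_{x_1,\Lambda}^{2^\#-2}Z_{1,\ell}\,\phi$ cleanly), rather than against $U_{x_\ell,\Lambda}^{2^\#-2}Z_{\ell,j}$ as you suggest; your choice also works but requires a bit more bookkeeping for the left-hand side.
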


\begin{proof}
We argue by contradiction.  Suppose that there are $k\to +\infty$,
$h=h_{k}$, $\Lambda_k\in [L_0,L_1]$, $r_k\in
[r_0\mu-\frac1{\mu^{\bar\theta}}, r_0\mu
+\frac1{\mu^{\bar\theta}}]$, and $\phi_k$ solving (\ref{3.1}) for
$h=h_{k}$, $\Lambda=\Lambda_k$, $r=r_k$,  with
 $\Vert
h_{k}\Vert_{**}\to 0$, and $\|\phi_k\|_*\ge c'>0$.  We may assume
that $\|\phi_k\|_*=1$. For simplicity, we drop the subscript $k$.\par

First, we estimate  $c_\ell$ ($\ell=1,2$). Multiplying \eqref{3.1} by
$Z_{1,\ell}$ and integrating, we see that $c_j$ satisfies
\begin{align}\label{g6}
&\sum_{j=1}^2\sum_{i=1}^k \bigl\langle  U_{x_i,\Lambda}^{2^\#-2}
Z_{i,j}, Z_{1,\ell} \bigr\rangle c_j \nonumber\\
=& \int_{\partial\mathbb
R^N_+}Z_{1,\ell}\F{\partial\phi}{\partial\nu}-(2^\#-1)\int_{\partial\mathbb
R^N_+} K(\F{|y|}{\mu})W_{r,\Lambda}^{2^\#-2}Z_{1,\ell}\phi
-\int_{\partial\mathbb R_+^N} hZ_{1,\ell}.
\end{align}
By Green's formulas, we have
\begin{align*}
&\ \int_{\partial\mathbb R^N_+}Z_{1,\ell}\F{\partial\phi}{\partial\nu}-(2^\#-1)\int_{\partial\mathbb R^N_+} K(\F{|y|}{\mu})W_{r,\Lambda}^{2^\#-2}Z_{1,\ell}\phi -\int_{\partial\mathbb R_+^N} hZ_{1,\ell} \\
=&\ \int_{\partial\mathbb R_+^N} \phi \left[\F{\partial Z_{1,\ell}}{\partial\nu} - (2^\#-1)K(\F{|y|}{\mu})W_{r,\Lambda}^{2^\#-2}Z_{1,\ell}\right] - \int_{\partial\mathbb R_+^N} hZ_{1,\ell}\\
:=&\ I_1 + I_2.
\end{align*}
The equation of $Z_{1,\ell}$ indicates that, in $\partial\mathbb R_+^N$,
\begin{align*}
&\F{\partial Z_{1,\ell}}{\partial\nu} -
(2^\#-1)K(\F{|y|}{\mu})W_{r,\Lambda}^{2^\#-2}Z_{1,\ell}\\
=&
(2^\#-1)Z_{1,\ell}\left[U_{x_1,\Lambda}^{2^\#-2}-K(\F{|y|}{\mu})W_{r,\Lambda}^{2^\#-2}\right].
\end{align*}
Note that, because of  Lemma \ref{laa1} and Lemma \ref{laa2},
\begin{align*}
&\ \left|\int_{\partial\mathbb R_+^N\setminus\Omega_1}\phi Z_{1,\ell} U_{x_1,\Lambda}^{2^\#-2}\right| \leq C\|\phi\|_*\int_{\partial\mathbb R_+^N\setminus\Omega_1}U_{x_1,\Lambda}^{2^\#-1}
\sum_{i=1}^k\F{1}{(1+|\bar y-\bar x_i|)^{\F{N}{2}-\F{m}{N-2}+\tau}} \\
\leq &\ C\|\phi\|_* \sum_{i=2}^k \int_{\Omega_i} \F{1}{(1+|\bar y-\bar x_1|)^{N}}\F{1}{(1+|\bar y-\bar x_i|)^{\F{N}{2}-\F{m}{N-2}+\tau-1}} \\
\leq&\ C\|\phi\|_* \sum_{i=2}^k\F{1}{|\bar x_i-\bar x_1|^{\F{N}{2}-\F{m}{N-2}}}\int_{\Omega_i} \F{1}{(1+|\bar y-\bar x_i|)^{N-1+\tau}} =o(1)\|\phi\|_*,
\intertext{and}
&\ \left|\int_{\partial\mathbb R_+^N\setminus\Omega_1}\phi Z_{1,\ell}W_{r,\Lambda}^{2^\#-2}\right|  = \left|\sum_{i=2}^k \int_{\Omega_i}\phi Z_{1,\ell}W_{r,\Lambda}^{2^\#-2}\right| \\
\leq&\ C\|\phi\|_*\sum_{i=2}^k \int_{\Omega_i} U_{x_1,\Lambda}W_{r,\Lambda}^{2^\#-2} \sum_{j=1}^k\F{1}{(1+|\bar y-\bar x_j|)^{\F{N}{2}-\F{m}{N-2}+\tau}} \\
\leq &\ C\|\phi\|_* \sum_{i=2}^k \F{1}{|\bar x_i -\bar x_1|^{\F{N+2}{2}-\F{N}{N-2}+\F{2m}{(N-2)^2}}}\int_{\Omega_i} \F{1}{(1+|\bar y-\bar x_i|)^{N-1+\tau}}=o(1)\|\phi\|_*
\end{align*}
since $\F{N+2}{2}-\F{N}{N-2}+\F{2m}{(N-2)^2}>1$ for $N\geq 5$.
Then we have
\begin{align}
I_1 =&\ (2^\#-1) \int_{\Omega_1}\phi Z_{1,\ell}U_{x_1,\Lambda}^{2^\#-2}\left(1-K(\F{|\bar y|}{\mu})\right) \mathrm d\bar y \nonumber \\
&\ +O\left\{\int_{\Omega_1}\phi Z_{1,\ell} \left[U_{x_1,\Lambda}^{2^\#-3}\sum_{i=2}^k U_{x_i,\Lambda}+\big(\sum_{i=2}^k U_{x_i,\Lambda}\big)^{2^\#-2}\right]\mathrm d\bar y \right\} \nonumber\\
&\ +o(1)\|\phi\|_*. \label{6}
\end{align}
Direct computations show that
\begin{align*}
&\ \left|\int_{\Omega_1}\phi Z_{1,\ell}U_{x_1,\Lambda}^{2^\#-2}\left(1-K(\F{|\bar y|}{\mu})\right) \mathrm d\bar y\right| \\
\leq &\ C\F{\|\phi\|_*}{\mu^m}\int_{||\bar y|-\mu r_0|\leq \mu^{\frac{m}{N-2}}}||\bar y|-\mu r_0|^m U_{x_1,\Lambda}^{2^\#-1}\sum_{j=1}^k\F{\mathrm d \bar y}{(1+|\bar y-\bar x_j|)^{\F{N}{2}-\F{m}{N-2}+\tau}}\\
& + C\|\phi\|_* \int_{||\bar y|-\mu r_0|\geq \mu^{\frac{m}{N-2}}}U_{x_1,\Lambda}^{2^\#-1}\sum_{j=1}^k\F{\mathrm d \bar y}{(1+|\bar y-\bar x_j|)^{\F{N}{2}-\F{m}{N-2}+\tau}}\\
\leq &\ C\F{\|\phi\|_*}{\mu^m}\int_{||\bar y|-\mu r_0|\leq \mu^{\frac{m}{N-2}}} \F{||\bar y|-\mu r_0|^m\mathrm d\bar y}{(1+|\bar y-\bar x_1|)^{N+\F{N}{2}-\F{m}{N-2}-\F{N-2-m}{N-2}+\tau}}\\
& +O(\mu^{-\frac{m}{N-2}(\frac{N+2}{2}-\frac{m}{N-2})})\|\phi\|_* \\
=&\ o(1)\|\phi\|_*.
\end{align*}
Similar estimates can be gotten for the second term of (\ref{6}).
Thus we get that \BEN |I_1| = o(1)\|\phi\|_*. \EEN In addition it
holds that, using the estimates in the proof of Lemma \ref{laa1},
\begin{align*}
|I_2| \leq& C\|h\|_{**}\int_{\mathbb R^{N-1}}\F{1}{(1+|\bar y-\bar x_1|)^{N-2}}\sum_{i=1}^k \F{1}{(1+|\bar y-\bar x_i|)^{\F{N+2}{2}-\F{m}{N-2}+\tau}} \\
 \leq& C\|h\|_{**} \int_{\mathbb R^{N-1}}\F{1}{(1+|\bar y-\bar x_1|)^{N-2+\F{N+2}{2}-\F{m}{N-2}+\tau}} \\
& +C\|h\|_{**} \sum_{i=2}^k \int_{\mathbb R^{N-1}} \F{1}{(1+|\bar y-\bar x_1|)^{N-2}}\F{1}{(1+|\bar y-\bar x_i|)^{\F{N+2}{2}-\F{m}{N-2}+\tau}} \\
\leq & C\|h\|_{**}.
\end{align*}
\par

On the other hand, for any $i\neq 1$, it is easy to check that
\BE\label{4}
\left|\left\langle  U_{x_i,\Lambda}^{2^\#-2} Z_{i,j}, Z_{1,\ell} \right\rangle \right|
\leq C\int_{\partial\mathbb R_+^{N}} U_{x_i,\Lambda}^{2^\#-1}U_{x_1,\Lambda} .
\EE
By Lemma \ref{laa1}, we may have that
\begin{align}
&\int_{\partial\mathbb R_+^{N}}
U_{x_i,\Lambda}^{2^\#-1}U_{x_1,\Lambda} \mathrm d\bar y
 \leq \int_{\partial\mathbb R_+^{N}}\F{1}{(1+|\bar y-\bar x_i|)^N}\F{1}{(1+|\bar y-\bar x_1|)^{N-2}} \nonumber\\
\leq &\F{C}{|\bar x_i-\bar x_1|^{N-2}}\int_{\partial\mathbb R_+^{N}}\left[\F{1}{(1+|\bar y-\bar x_i|)^N}+\F{1}{(1+|\bar y-\bar x_1|)^{N}}\right] \nonumber\\
\leq &\F{C}{|\bar x_i-\bar x_1|^{N-2}}. \label{14}
\end{align}
In addition, it is easy to get from the symmetry that, for $j\neq
\ell$, \BE\label{15} \left\langle  U_{x_i,\Lambda}^{2^\#-2} Z_{i,j},
Z_{i,\ell} \right\rangle  = 0. \EE Now we find that the coefficient
matrix of the system (\ref{g6}) with respect to $(c_1,c_2)$ is
nondegenerate. Therefore \BEN |c_\ell| \leq o(1)\|\phi\|_* +
C\|h\|_{**}=o(1). \EEN We claim that \BEN \|\phi\|_{L^\infty(
\overline{\mathbb R_+^N}\cap| y- x_i|\leq R )} = o(1) \qquad
\text{for any }i=1,\ldots,k. \EEN Indeed, by elliptic regularity we
can get a $\hat\phi$ such that $\phi(y-x_i)\to\hat\phi$ in
$C^2_\text{loc}(\overline{\mathbb R_+^N})$ and \BEN
\begin{cases}
\D -\Delta \hat\phi = 0  & \text{in }\mathbb R_+^N, \smallskip \\
\D\F{\partial\hat\phi}{\partial\nu}- (2^\#-1)U_{0,\Lambda}^{2^\#-2}\phi= 0 \quad &\text{on }\partial\mathbb R_+^N, \medskip\\
\D \left\langle U_{0,\Lambda}^{2^\#-2}Z_{0, j},\hat \phi \right\rangle=0  & j=1,2,
\end{cases}
\EEN
This implies $\hat\phi=0$, which concludes the claim.\par

We  next rewrite (\ref{3.1}) as
\begin{align*}
\phi(y)=&\  \int_{\partial\mathbb R_+^N} G(y,z) \sum_{j=1}^2 c_{j}\sum_{i=1}^k U_{x_i,\Lambda}^{2^\#-2}(z)Z_{i,j}(z)\mathrm d\bar z \\
&\ +\int_{\partial\mathbb R_+^N} G(y,z) \left[(2^\#-1)
K(\F{|z|}{\mu})W_{r,\Lambda}^{2^\#-2}(z)\phi(z)+h(z)\right]\mathrm
d\bar z ,
\end{align*}
where $z=(\bar z, 0) \in \mathbb R^{N-1} \times \{0\}.$

Direct computations show that
\begin{align*}
&\ \left|\int_{\partial\mathbb R_+^N} G(y,z) \sum_{i=1}^k U_{x_i,\Lambda}^{2^\#-2}(z)Z_{i,j}(z)\mathrm d \bar z\right|\\
\leq&\ C\sum_{i=1}^k\int_{\partial\mathbb R_+^N} \F{1}{| y-z|^{N-2}}\F{1}{(1+ |z- x_i|)^N}\mathrm d \bar z\\
\leq &\ C\sum_{i=1}^k \F{1}{(1+|\ y- x_i|)^{\frac{N}{2}}} \leq
C\sum_{i=1}^k \F{1}{(1+| y- x_i|)^{\F{N}{2}-\F{m}{N-2}+\tau}},
\end{align*}
where we have used Lemma \ref{laa2}.
\begin{align*}
\left|\int_{\partial\mathbb R_+^N}G(y,z)h(z)\mathrm d\bar z\right|
&\leq C\|h\|_{**} \int_{\partial\mathbb R_+^N} \F{1}{| y-z|^{N-2}}
\sum_{i=1}^k \F{1}{(1+|z- x_i|)^{\F{N+2}{2}-\F{m}{N-2}+\tau}}\mathrm d\bar z \\
&\leq C\|h\|_{**} \sum_{i=1}^k\F{1}{(1+| y-
x_i|)^{\F{N}{2}-\F{m}{N-2}+\tau}} ,
\end{align*}
since $\F{N}{2}-\F{m}{N-2}+\tau < N-2$ and
\begin{align*}
&\ \left|\int_{\partial\mathbb R_+^N} G(y,z) K(\F{|z|}{\mu})W_{r,\Lambda}^{2^\#-2}(z)\phi(z)\mathrm d\bar z \right| \\
\leq&\ o(1)\sum_{i=1}^k \int_{B_R(\bar x_i)}\F{1}{| y-  z|^{N-2}}W_{r,\Lambda}^{2^\#-2}( z) \mathrm d\bar z\\
&\ +\|\phi\|_* \int_{\partial\mathbb R_+^N\setminus \underset{i=1}{\overset{k}\cup} B_R(\bar x_i)} \F{1}{| y-  z|^{N-2}}W_{r,\Lambda}^{2^\#-2}( z) \sum_{j=1}^k\frac1{(1+| z- x_j|)^{\F{N}{2}-\F{m}{N-2}+\tau}}\mathrm d\bar z\\
\leq&\  o(1)\sum_{i=1}^k \F{1}{(1+| y- x_i|)^{\F{N}{2}-\F{m}{N-2}+\tau}} \\
&\qquad +\F{C}{R^{\F{(m-2)N+4}{(N-2)^2}}}\|\phi\|_* \sum_{i=1}^k \int_{\Omega_i} \F{1}{| y -z|^{N-2}}\F{1}{(1+| z- x_i|)^{\F{N}{2}-\F{m}{N-2}+1+\tau}}\mathrm d\bar z\\
\leq&\
\left(o(1)+\F{C}{R^{\F{(m-2)N+4}{(N-2)^2}}}\|\phi\|_*\right)\sum_{i=1}^k
\F{1}{(1+| y- x_i|)^{\F{N}{2}-\F{m}{N-2}+\tau}}  .
\end{align*}
Up to now, choosing $R$ large, we obtain that \BEN \|\phi\|_* \leq C
\|h\|_{**} +o(1) = o(1), \EEN a contradiction.
\end{proof}

From Lemma~\ref{l1}, using the same argument as in the proof of
Proposition~4.1 in \cite{DFM}, we can prove the following result.

\begin{proposition}\label{p1}
There exists $k_0>0 $ and a constant $C>0$, independent of $k$, such
that for all $k\ge k_0$ and all $h\in L^{\infty}(\mathbb R^{N-1})$,
problem $(\ref{3.1})$ has a unique solution $\phi:= L_k(h)$.
Besides,
\begin{equation}\label{Le}
\Vert L_k(h)\Vert_*\leq C\Vert h\Vert_{**},\qquad
|c_{\ell}|\leq C\Vert h\Vert_{**}.
\end{equation}
\end{proposition}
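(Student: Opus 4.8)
The statement to prove is Proposition \ref{p1}: existence and uniqueness of the solution to the linearized problem \eqref{3.1}, with the estimate \eqref{Le}, for all $k$ large. The plan follows the standard functional-analytic recipe for finite-dimensional Lyapunov--Schmidt reductions, using Lemma \ref{l1} as the crucial a priori bound.

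The plan is to first recast \eqref{3.1} as an equation in a suitable Hilbert (or Banach) framework where Fredholm theory applies, then invoke Lemma \ref{l1} to rule out the kernel, and finally deduce the quantitative bound \eqref{Le} by a compactness/contradiction argument of the same flavour.

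\textbf{Step 1: Reformulation.} First I would introduce the space $H$ of functions in $D^{1,2}(\mathbb R_+^N)$ satisfying the symmetries \eqref{i} and \eqref{ii} together with the orthogonality constraints $\langle U_{x_i,\Lambda}^{2^\#-2} Z_{i,j},\phi\rangle=0$ for $i=1,\dots,k$, $j=1,2$; this is a closed subspace. Testing \eqref{3.1} against elements of $H$ and using the harmonic extension, the boundary problem becomes an equation of the form $\phi = T_k(\phi) + \widetilde h$ on $H$, where $T_k$ is the operator given by convolution against the Green's function $G$ with the potential $(2^\#-1)K(|z|/\mu)W_{r,\Lambda}^{2^\#-2}$, composed with the projection onto $H$, and $\widetilde h$ encodes the datum $h$. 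Using the decay of $W_{r,\Lambda}^{2^\#-2}$ and the Green's function estimates (of the kind already carried out at the end of the proof of Lemma \ref{l1}), $T_k$ maps $L^\infty$-type weighted spaces compactly into themselves, so $I - T_k$ is a Fredholm operator of index zero. Hence existence and uniqueness for every $h$ is equivalent to injectivity.

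\textbf{Step 2: Injectivity via Lemma \ref{l1}.} Injectivity is exactly the content of Lemma \ref{l1} applied with $h\equiv 0$: if $(I-T_k)\phi=0$ then $\phi$ solves \eqref{3.1} with $h=0$, so $\|\phi\|_*\le$ (something tending to $0$)$\,\cdot\|\phi\|_*$... more precisely, were there a nontrivial kernel for infinitely many $k$, one could normalize $\|\phi_k\|_*=1$, and Lemma \ref{l1} with $h_k\equiv 0$ would force $\|\phi_k\|_*\to 0$, a contradiction. Thus for all $k\ge k_0$ the operator $I-T_k$ is invertible on $H$, giving the unique solution $\phi=:L_k(h)$ and the numbers $c_\ell$ (recovered by solving the same nondegenerate $2\times 2$ linear system in $(c_1,c_2)$ displayed in \eqref{g6}, whose coefficient matrix was shown there to be invertible with a uniform lower bound on its smallest singular value).

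\textbf{Step 3: The uniform estimate \eqref{Le}.} For the bound $\|L_k(h)\|_*\le C\|h\|_{**}$ with $C$ independent of $k$, I would argue by contradiction once more: if not, there are $k\to\infty$, data $h_k$ with $\|h_k\|_{**}\to 0$ and solutions $\phi_k$ with $\|\phi_k\|_*=1$; but this is precisely the hypothesis of Lemma \ref{l1}, which yields $\|\phi_k\|_*\to 0$, contradicting the normalization. The estimate $|c_\ell|\le C\|h\|_{**}$ then follows from the same $2\times 2$ system \eqref{g6}: its right-hand side is $O(\|h\|_{**}) + o(1)\|\phi\|_* = O(\|h\|_{**})$ once $\|\phi\|_*\le C\|h\|_{**}$ is known. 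This is the ``same argument as in the proof of Proposition 4.1 in \cite{DFM}'' referred to in the text.

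\textbf{Main obstacle.} The genuinely delicate point is not the abstract scheme but the uniformity in $k$: one must verify that the constant $C$ and the threshold $k_0$ do not degenerate as the number of bubbles grows. This is where the choice of the weighted norms $\|\cdot\|_*$ and $\|\cdot\|_{**}$ (with the exponents $\tfrac N2-\tfrac m{N-2}+\tau$ and $\tfrac{N+2}2-\tfrac m{N-2}+\tau$) is essential, together with the summability estimates $\sum_{i\ge 2}|\bar x_i-\bar x_1|^{-\beta}\le C(k/\mu)^\beta\le C$ valid precisely because $\beta>1$ for $N\ge 5$ and $\mu=k^{(N-2)/(N-2-m)}$. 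All of these uniform bounds are already packaged into Lemma \ref{l1} (and the auxiliary Lemmas \ref{laa1}, \ref{laa2}), so once that lemma is in hand the present proposition is a formal consequence; the work has effectively been done upstream.
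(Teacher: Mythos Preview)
Your proposal is correct and follows essentially the same approach as the paper, which itself defers to \cite{DFM}: Fredholm alternative in the symmetric constrained space, injectivity from Lemma \ref{l1} with $h\equiv 0$, and the uniform bound by the contradiction argument that \emph{is} Lemma \ref{l1}. You have also correctly identified that the uniformity in $k$ is the only substantive issue and that it has already been absorbed into Lemma \ref{l1} via the choice of weighted norms.
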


It is now ready for us to prove Proposition \ref{p1-6-3}.
\begin{proof}[Proof of Proposition~\ref{p1-6-3}]
Let us recall that $ \mu = k^{\frac{N-2}{N-2-m}}$ and denote
\begin{multline*}
E=\bigg\{ \phi\in C(\overline{\mathbb R_+^N} ):\  \phi \text{ satisfies }(\ref{i})\text{ and }(\ref{ii}), \ \|\phi\|_*\leq \eta\bigl(\frac 1\mu\bigr)^{\frac {m}{2}+\sigma}, \\
\int_{\partial\mathbb R_+^N} U_{x_i,\Lambda}^{2^\#-2} Z_{i,j}\phi=0\text{ for any } i=1,\cdots,k,\  j=1,2
\bigg\},
\end{multline*}
where $\eta>0$ is a fixed large constant.  Then (\ref{5}) is
equivalent to
\[
\phi= A(\phi):= L(N(\phi))+L(R).
\]
We will first prove that $A$ is a contraction mapping from $E$ to
$E$.
\par

In fact, by Lemma \ref{l1.1} and Lemma \ref{l1.2} we have
\begin{align*}
\|\phi\|_* &\leq C\|R\|_{**} + C\|N(\phi)\|_{**} \\
&\leq C\left(\F{1}{\mu}\right)^{\F{m}{2}+\sigma}
+ C \eta^\F{N}{N-2} \left(\F{1}{\mu}\right)^{\left(\F{m}{2}+\sigma\right)\F{N}{N-2}}\\
&\leq C\left(\F{1}{\mu}\right)^{\F{m}{2}+\sigma}
\left(1+\eta^\F{N}{N-2}\left(\F{1}{\mu}\right)^{\left(\F{m}{2}+\sigma\right)\F{2}{N-2}}\right)\\
&\leq \eta \left(\F{1}{\mu}\right)^{\F{m}{2}+\sigma}.
\end{align*}
\newpage

Thus $A$ maps $E$ to $E$ itself.\par

On the other hand, it holds obviously that
\[
\| A(\phi_1)-A(\phi_2)\|_{*}= \|L(N(\phi_1))-L(N(\phi_2))\|_{*}
\le C \|N(\phi_1)-N(\phi_2)\|_{**}.
\]
Since $2^{\#}-2 < 1$, we have that
\[
|N'(t)| \le C|t|^{2^\#-2}.
\]
Thus for any $y \in \partial\mathbb{R}_+^N$,
\begin{align*}
&C|N(\phi_1)-N(\phi_2)| \leq C(|\phi_1|^{2^\#-2}+|\phi_2|^{2^\#-2})|\phi_1-\phi_2| \\
\leq & C(\|\phi_1\|_*^{2^\#-2} + \|\phi_2\|_*^{2^\#-2})\|\phi_1-\phi_2\|_* \left(\sum_{j=1}^k \F{1}{(1+|\bar y- \bar x_j|)^{\F{N}{2}-\F{m}{N-2}+\tau}}\right)^{2^\#-1}\\
\leq & C\eta^\F{2}{N-2}\bigl(\frac 1\mu\bigr)^{\left(\F{m}{2}+\sigma\right)\F{2}{N-2}} \|\phi_1-\phi_2\|_* \sum_{j=1}^k \frac1{(1+|\bar y- \bar x_j|)^{\frac{N+2}{2}-\F{m}{N-2}+ \tau}}\\
\leq & \F{1}{2}\|\phi_1-\phi_2\|_* \sum_{j=1}^k \frac1{(1+|\bar
y-\bar x_j|)^{\frac{N+2}{2}-\F{m}{N-2}+ \tau}}.
\end{align*}
where the relation (\ref{1-l-1-5-3}) has been used.  Thus $A$ is a
contraction mapping.\par

It follows from the contraction mapping theorem that there is a
unique $\phi\in E$ such that
\[
\phi=A(\phi).
\]
The proof finishes.
\end{proof}

\section{Variational reduction}\label{s4}

After problem (\ref{lin}) has been solved, we find a solution to
problem (\ref{5}) and hence to the original problem (\ref{8}) if
$(r,\Lambda)$ is such that \BEN c_{j}(r,\Lambda)=0,\qquad j=1, 2.
\EEN This problem is in fact variational.\par

Let
\BEN
F(r,\Lambda)=I(W_{r,\Lambda}+\phi)
\EEN
where $\phi$ is the function obtained in Proposition \ref{p1-6-3} and
\[
I(u)=\frac{1}{2}\int_{\mathbb R_+^N} |Du|^2-\frac1{2^\#}\int_{\partial\mathbb R_+^N} K\bigl(\frac{|y|}\mu\bigr)|u|^{2^\#}.
\]

\BP
Assume $(r,\Lambda)$ is a critical point of $F(r,\Lambda)$. Then $c_j=0$ for any $j=1,2$.
\EP

\begin{proof}
By (\ref{14}) and (\ref{15}), we first get that
\begin{align*}
&\ \sum_{j=1}^2 c_j \sum_{i=1}^k\int_{\partial\mathbb R_+^N} U_{x_i,\Lambda}^{2^\#-2} Z_{i,j}\F{\partial W_{r,\Lambda}}{\partial \Lambda} = \sum_{j=1}^2  c_j \sum_{i=1}^k\sum_{\ell=1}^k \int_{\partial\mathbb R_+^N} U_{x_i,\Lambda}^{2^\#-2} Z_{i,j} Z_{\ell,2}\\
=&\ \sum_{j=1}^2 c_j \sum_{i=1}^k \int_{\partial\mathbb R_+^N} U_{x_i,\Lambda}^{2^\#-2} Z_{i,j} Z_{i,2} + O\left(\sum_{j=1}^2 c_j \sum_{i=1}^k\sum_{\ell\neq i}\F{1}{|\bar x_i-\bar x_\ell|^{N-2}}\right) \\
=&\ c_2\sum_{i=1}^k \int_{\partial\mathbb R_+^N} U_{x_i,\Lambda}^{2^\#-2} Z_{i,2}^2 + O\left(k\mu^{-m}\sum_{j=1}^2 c_j\right)\\
=&\ k c_2\left[ \int_{\partial\mathbb R_+^N}
U_{x_1,\Lambda}^{2^\#-2} Z_{1,2}^2 +
O\left(\F{1}{\mu}\right)^{m}\right]  +k O\left(\F{1}{\mu}\right)^{m}
c_1,
\end{align*}
and similarly
\begin{align*}
&\sum_{j=1}^2 c_j \sum_{i=1}^k\int_{\partial\mathbb R_+^N} U_{x_i,\Lambda}^{2^\#-2} Z_{i,j}\F{\partial W_{r,\Lambda}}{\partial r}\\
=&kc_1\left[ \int_{\partial\mathbb R_+^N} U_{x_1,\Lambda}^{2^\#-2}
Z_{1,1}^2 + O\left(\F{1}{\mu}\right)^{m}\right]  +
kO\left(\F{1}{\mu}\right)^{m} c_2.
\end{align*}
In addition, since
\begin{align*}
\left|\F{\partial Z_{i,1}}{\partial \Lambda}\right| & \leq CU_{x_i,\Lambda}\F{\Lambda^2 |\bar y -\bar x_i|}{(1+\Lambda y_N)^2+\Lambda^2|\bar y-\bar x_i|^2}\leq CU_{x_i,\Lambda},\\
\left|\F{\partial Z_{i,2}}{\partial \Lambda}\right| & \leq CU_{x_i,\Lambda},
\end{align*}
it holds that
\begin{align}
&\ \left|\sum_{i=1}^k\int_{\partial\mathbb R_+^N} U_{x_i,\Lambda}^{2^\#-2} Z_{i,j}\F{\partial \phi}{\partial \Lambda}\right| =\left|-\sum_{i=1}^k\int_{\partial\mathbb R_+^N} \F{\partial \left(U_{x_i,\Lambda}^{2^\#-2} Z_{i,j}\right)}{\partial \Lambda} \phi \right| \nonumber \\
\leq &\  C\sum_{i=1}^k\int_{\partial\mathbb R_+^N}U_{x_i,\Lambda}^{2^\#-1}|\phi| \nonumber\\
 \leq &\ C \|\phi\|_*\int_{\partial\mathbb R_+^N} \sum_{i=1}^k\F{1}{(1 + |\bar y-\bar x_i|)^N} \sum_{\ell=1}^k \F{1}{(1+|\bar y-\bar x_\ell|)^{\F{N}{2}-\F{m}{N-2}+\tau}}\mathrm d\bar y\nonumber\\
\leq &\ C\|\phi\|_* \sum_{i=1}^k\int_{\Omega_i} \F{\mathrm d \bar y}{(1+|\bar y-\bar x_i|)^{N-1+\F{N}{2}-\F{m}{N-2}-\F{2(N-2-m)}{N-2}+\tau}} \nonumber \\
\leq &\ C k \left(\F{1}{\mu}\right)^{\F{m}{2}+\sigma}. \label{16}
\end{align}
A same estimate also holds for  $\sum_{i=1}^k\int_{\partial\mathbb R_+^N} U_{x_i,\Lambda}^{2^\#-2} Z_{i,j}\F{\partial \phi}{\partial r}$.\par

Finally we note that
\begin{align*}
0=\F{\partial F}{\partial r} &= \int_{\partial\mathbb R_+^N} \sum_{j=1}^2 c_{j}\sum_{i=1}^k U_{x_i,\Lambda}^{2^\#-2}Z_{i,j}\left(\F{\partial W_{r,\Lambda}}{\partial r}+\F{\partial \phi}{\partial r}\right),\\
0=\F{\partial F}{\partial \Lambda} &= \int_{\partial\mathbb R_+^N} \sum_{j=1}^2 c_{j}\sum_{i=1}^k U_{x_i,\Lambda}^{2^\#-2}Z_{i,j}\left(\F{\partial W_{r,\Lambda}}{\partial \Lambda}+\F{\partial \phi}{\partial \Lambda}\right).
\end{align*}
Therefore it is easy for us to get that $c_j=0$ ($j=1,2$) from the
nondegeneracy of their coefficient matrix.
\end{proof}

\begin{proposition}\label{p2-6-3}
We have
\[
\begin{split}
F(r,\Lambda)= & I(W_{r,\Lambda})+O\Bigl(\frac k{\mu^{m+2\sigma}}\Bigr)\\
=& k\Bigl( A +\frac{B_1}{\Lambda^{m}\mu^m}
+\frac{B_2}{\Lambda^{m-2}\mu^m}
(\mu r_0 -|x_1|))^2\\
&-\sum_{i=2}^k\frac{ B_3 }{\Lambda^{N-2}|x_1-x_j|^{N-2}}+
O\Bigl(\frac1{\mu^{m+\sigma}}+\frac{1}{\mu^m} |\mu r_0
-|x_1||^3\Bigr) \Bigr),
\end{split}
\]
where $B_i>0$, $i=1,2,3$, are some
constants.
\end{proposition}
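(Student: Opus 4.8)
The plan is to establish the two displayed equalities in turn: first that passing from $W_{r,\Lambda}$ to the corrected ansatz $W_{r,\Lambda}+\phi$ perturbs the energy only by $O\bigl(k\mu^{-m-2\sigma}\bigr)$, and then to compute $I(W_{r,\Lambda})$ by expanding the mutual interaction of the $k$ bubbles together with the contribution of the non-constant curvature $K$.

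For the first step I would Taylor-expand $I$ to second order about $W_{r,\Lambda}$,
\[
I(W_{r,\Lambda}+\phi)=I(W_{r,\Lambda})+\langle I'(W_{r,\Lambda}),\phi\rangle+\tfrac12\langle I''(W_{r,\Lambda})\phi,\phi\rangle+O\Bigl(\textstyle\int_{\partial\mathbb R^N_+}|\phi|^{2^\#}\Bigr),
\]
the remainder term being legitimate since $2<2^\#<3$ for $N\ge5$. Because $-\Delta W_{r,\Lambda}=0$ and $\partial_\nu W_{r,\Lambda}=\sum_i U_{x_i,\Lambda}^{2^\#-1}$, integration by parts gives $\langle I'(W_{r,\Lambda}),\phi\rangle=\pm\int_{\partial\mathbb R^N_+}R\,\phi$; testing the equation (\ref{lin}) solved by $\phi$ against $\phi$ and using the orthogonality relations $\langle U_{x_i,\Lambda}^{2^\#-2}Z_{i,j},\phi\rangle=0$ to cancel the $c_j$-terms, the Hessian term collapses to $\int_{\partial\mathbb R^N_+}(\pm R+N(\phi))\phi$. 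Thus $F(r,\Lambda)-I(W_{r,\Lambda})$ is a sum of integrals of the type $\int_{\partial\mathbb R^N_+}(\,\cdot\,)\phi$ whose first factor is controlled in $\|\cdot\|_{**}$ by $\|R\|_{**}+\|N(\phi)\|_{**}$. Combining the elementary bound
\[
\int_{\partial\mathbb R^N_+}\Bigl(\sum_{j=1}^k\tfrac1{(1+|\bar y-\bar x_j|)^{\frac{N+2}2-\frac m{N-2}+\tau}}\Bigr)\Bigl(\sum_{j=1}^k\tfrac1{(1+|\bar y-\bar x_j|)^{\frac N2-\frac m{N-2}+\tau}}\Bigr)\le Ck,
\]
in which the diagonal terms are integrable precisely because $\frac m{N-2}<1$ and the off-diagonal ones are of lower order by the interaction estimates of the appendix, with Lemma~\ref{l1.1}, Lemma~\ref{l1.2} and the bound $\|\phi\|_*\le C\mu^{-\frac m2-\sigma}$ of Proposition~\ref{p1-6-3}, one obtains $|F(r,\Lambda)-I(W_{r,\Lambda})|\le Ck\|R\|_{**}\|\phi\|_*+Ck\|\phi\|_*^{2^\#}\le Ck\mu^{-m-2\sigma}$.

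For the second step I would again use $\partial_\nu W_{r,\Lambda}=\sum_i U_{x_i,\Lambda}^{2^\#-1}$ to write $I(W_{r,\Lambda})=\tfrac12\sum_{i,\ell}\int_{\partial\mathbb R^N_+}U_{x_i,\Lambda}^{2^\#-1}U_{x_\ell,\Lambda}-\tfrac1{2^\#}\int_{\partial\mathbb R^N_+}K(\tfrac{|y|}\mu)W_{r,\Lambda}^{2^\#}$ and split both sums into diagonal ($i=\ell$) and off-diagonal parts, the rotational symmetry of $\{x_j\}$ reducing each $k$-fold sum to $k$ times its $j=1$ term. The diagonal part equals $k\bigl[(\tfrac12-\tfrac1{2^\#})\int_{\partial\mathbb R^N_+}U_{0,1}^{2^\#}+\tfrac1{2^\#}\int_{\partial\mathbb R^N_+}(1-K(\tfrac{|\bar y|}\mu))U_{x_1,\Lambda}^{2^\#}\bigr]$; writing $|\bar y|=|x_1|+\langle\tfrac{\bar x_1}{|x_1|},\bar y-\bar x_1\rangle+O(|\bar y-\bar x_1|^2/|x_1|)$, inserting (\ref{K}) and rescaling $\bar z=\Lambda(\bar y-\bar x_1)$, this yields the constant term $kA$, the term $kB_1/(\Lambda^m\mu^m)$ from $c_0\mu^{-m}\int|\langle\tfrac{\bar x_1}{|x_1|},\bar y-\bar x_1\rangle|^m U_{x_1,\Lambda}^{2^\#}$, and the term $kB_2(\mu r_0-|x_1|)^2/(\Lambda^{m-2}\mu^m)$ from the second-order Taylor coefficient of $t\mapsto|t|^m$ in the $r$-variable (the first-order one vanishing by the reflection symmetry of $U_{x_1,\Lambda}$); the $O(|\,\cdot\,|^{m+\theta})$-tail of (\ref{K}), the curvature correction $O(|\bar y-\bar x_1|^2/|x_1|)$ and the small ball $|\bar y-\bar x_1|\lesssim|r-\mu r_0|$ are all absorbed into $O(\mu^{-m-\sigma}+\mu^{-m}|\mu r_0-|x_1||^3)$. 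The off-diagonal part is $\tfrac12\sum_{i\ne\ell}\int U_{x_i,\Lambda}^{2^\#-1}U_{x_\ell,\Lambda}$ minus the cross terms of $\tfrac1{2^\#}\int K W_{r,\Lambda}^{2^\#}$; since the bubbles are well separated and $2^\#>2$, the latter equals $\sum_{i\ne\ell}\int U_{x_i,\Lambda}^{2^\#-1}U_{x_\ell,\Lambda}$ to leading order, so the net is $-\tfrac12\sum_{i\ne\ell}\int U_{x_i,\Lambda}^{2^\#-1}U_{x_\ell,\Lambda}=-kB_3\sum_{i=2}^k(\Lambda^{N-2}|x_1-x_i|^{N-2})^{-1}(1+o(1))$ by the interaction asymptotics, the $o(1)$-corrections and the replacement of $K$ by $1$ in these integrals costing $O(k\mu^{-m-\sigma})$, using $\sum_{i\ge2}|x_1-x_i|^{-(N-2)}\sim\mu^{-m}$ (which follows from $|x_1-x_i|\sim\mu(i-1)/k$ and $\mu=k^{\frac{N-2}{N-2-m}}$).

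The delicate point is precisely the diagonal curvature integral $\int_{\partial\mathbb R^N_+}(1-K(\tfrac{|\bar y|}\mu))U_{x_1,\Lambda}^{2^\#}$: the bubble $U_{x_1,\Lambda}$ lives at unit scale around $|\bar y|=|x_1|$, whereas $|x_1|/\mu-r_0$ has the much smaller size $O(\mu^{-1-\bar\theta})$, so one must first Taylor-expand $|\bar y|$ in $\bar y-\bar x_1$ and then expand the non-smooth power $|\,\cdot\,|^m$ — whose second derivative is only Hölder continuous of exponent $m-2$ when $2\le m<3$ — about its dominant piece $\mu^{-1}\langle\tfrac{\bar x_1}{|x_1|},\bar y-\bar x_1\rangle$, keeping track of the odd-in-$(\bar y-\bar x_1)$ cancellations and verifying that every discarded piece genuinely falls inside the stated error. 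This bookkeeping, together with the convergence of the rescaled integrals $\int|\langle e,\bar z\rangle|^{m}U_{0,1}^{2^\#}$ and $\int|\langle e,\bar z\rangle|^{m-2}U_{0,1}^{2^\#}$ over $\mathbb R^{N-1}$ (finite since $m<N-2<N$) for any unit vector $e$, pins down the constants $B_1,B_2>0$; the positivity of $B_3$ is read off from the sign of the bubble interaction. All the pointwise decay and integral bounds invoked above are those collected in the appendix (Lemmas~\ref{laa1}–\ref{laa2}).
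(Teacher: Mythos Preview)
Your proposal is correct and follows essentially the same route as the paper: the first equality is obtained by Taylor-expanding $I$ about $W_{r,\Lambda}$, testing the equation for $\phi$ against itself, and bounding the resulting terms $\int R\phi$, $\int N(\phi)\phi$, $\int|\phi|^{2^\#}$ via Lemmas~\ref{l1.1}, \ref{l1.2} and Proposition~\ref{p1-6-3}; the second equality is precisely the content of Proposition~\ref{pa2} in the appendix, whose proof you have accurately sketched (diagonal/off-diagonal splitting, interaction asymptotics, and the expansion of $||\bar y|-\mu r_0|^m$ about its linear part in $\bar y-\bar x_1$). Your handling of the linear term $\langle I'(W_{r,\Lambda}),\phi\rangle=-\int_{\partial\mathbb R^N_+}R\,\phi$ is in fact cleaner than the paper's, which asserts $DI(W_{r,\Lambda})\phi=0$---this is not literally true, but the term is $O(k\mu^{-m-2\sigma})$ anyway, exactly as you observe.
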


\begin{proof}
Since
\[
D I\bigl(  W_{r,\Lambda} \bigr)\phi=0,
\]
there is $t\in (0,1)$ such that
\[
\begin{split}
&\ F(r,\Lambda)= I(  W_{r,\Lambda}) +\frac12 D^2I\bigl(  W_{r,\Lambda} +t \phi\bigr) (\phi,\phi)\\
=&\ I(  W_{r,\Lambda}) + \F{1}{2}\int_{\mathbb R_+^N} |D \phi|^2-\F{2^\#-1}{2}\int_{\partial\mathbb R_+^N}K(\frac {|y|}\mu)
\bigl( W_{r,\Lambda}+t \phi\bigr)^{2^\#-2}\phi^2\\
=&\ I(  W_{r,\Lambda}) - \F{2^\#-1}{2}\int_{\partial\mathbb R_+^N}
K\bigl(\frac {|y|}\mu\bigr) \left[\bigl( W_r +t \phi\bigr)^{2^\#-2}-W_r^{2^\#-2}\right]\phi^2\\
&+\int_{\partial\mathbb R_+^N}\bigl( N(\phi)-R\bigr)\phi\\
=&\ I(  W_{r,\Lambda})+O\Bigl(\int_{\partial\mathbb R_+^N}\bigl(|\phi|^{2^\#}+|N(\phi)||\phi|+|R||\phi|\bigr)\Bigr).
\end{split}
\]
Moreover it is easy to check that,
\begin{align*}
&\int_{\partial\mathbb R_+^N}|N(\phi)| |\phi|\\
\leq&  C\|N(\phi)\|_{**}\|\phi\|_*
 \int_{\partial\mathbb R^N_+} \sum_{i,j=1}^k\frac1{(1+|\bar y-\bar x_j|)^{\frac{N+2}2-\F{m}{N-2}+\tau}}\frac1{(1+|\bar y-\bar x_i|)^{\frac{N}2-\F{m}{N-2}+\tau}}\mathrm d\bar y \\
 \leq &\ C (\F{1}{\mu})^{m+2\sigma}  \sum_{i=1}^k \int_{\partial\mathbb R_+^N}\F{\mathrm d\bar y}{(1+|\bar y-\bar x_i|)^{N+1-\F{2m}{N-2}+2\tau-\frac{N-2-m}{N-2}}}\\
 \leq &\ Ck\left(\F{1}{\mu}\right)^{m+2\sigma}.
\end{align*}
So does $\int_{\partial\mathbb R_+^N}|R| |\phi|$. Similarly, we have
\begin{align}
\int_{\partial\mathbb R_+^N}|\phi|^{2^\#} &\leq \|\phi\|_*^{2^\#}\int_{\partial\mathbb R_+^N} \left(\sum_{i=1}^k \frac1{(1+|\bar y-\bar x_i|)^{\frac{N}2-\F{m}{N-2}+\tau}}\right)^{2^\#}\mathrm  d\bar y\nonumber \\
&\leq C \|\phi\|_*^{2^\#} \sum_{\ell=1}^k \int_{\Omega_\ell} \F{\mathrm d\bar y}{(1+|\bar y-\bar x_\ell|)^{\left(\frac{N}2-\F{m}{N-2}-\F{N-2-m}{N-2}+\tau\right)\F{2(N-1)}{N-2}}} \nonumber\\
&\leq C \|\phi\|_*^{2^\#} \sum_{\ell=1}^k \int_{\Omega_\ell} \F{\mathrm d\bar y}{(1+|\bar y-\bar x_\ell|)^{N-1+\tau}} \nonumber\\
&\leq Ck\|\phi\|_*^{2^\#}  \leq C k \left(\F{1}{\mu}\right)^{\F{m(N-1)}{N-2}+2^\#\sigma}. \label{13}
\end{align}
From Proposition \ref{pa2} we conclude the proof.
\end{proof}

\begin{proposition}\label{p1-7-3}
We have
\[
\begin{split}
&\frac{\partial F(r,\Lambda)}{\partial \Lambda}\\
=& k\left( -\frac{B_1 m }{\Lambda^{m+1}\mu^m} +\sum_{i=2}^k\frac{
B_3(N-2) }{\Lambda^{N-1}|x_1-x_j|^{N-2}} +
O\Bigl(\frac1{\mu^{m+\sigma}}+\frac{1}{\mu^m} |\mu r_0
-|x_1||^2\Bigr) \right).
\end{split}
\]
\end{proposition}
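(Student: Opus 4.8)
The plan is to differentiate the identity $F(r,\Lambda)=I(W_{r,\Lambda}+\phi)$ directly, exploiting both the equation satisfied by $W_{r,\Lambda}+\phi$ and the orthogonality conditions, and then to reduce the bulk of the computation to differentiating the energy expansion of $I(W_{r,\Lambda})$ from the appendix. First I would note that $(r,\Lambda)\mapsto\phi(r,\Lambda)$ is of class $C^1$ (this follows from the contraction argument in Proposition~\ref{p1-6-3} together with the implicit function theorem), so that by the chain rule
\[
\F{\partial F}{\partial\Lambda}=DI(W_{r,\Lambda}+\phi)\Big[\F{\partial W_{r,\Lambda}}{\partial\Lambda}\Big]+DI(W_{r,\Lambda}+\phi)\Big[\F{\partial\phi}{\partial\Lambda}\Big].
\]

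For the second term I would use that $W_{r,\Lambda}+\phi$ solves (\ref{5}) up to the terms $\sum_{j}c_j\sum_i U_{x_i,\Lambda}^{2^\#-2}Z_{i,j}$, so that
\[
DI(W_{r,\Lambda}+\phi)\Big[\F{\partial\phi}{\partial\Lambda}\Big]=-\sum_{j=1}^2 c_j\sum_{i=1}^k\int_{\partial\mathbb R_+^N}U_{x_i,\Lambda}^{2^\#-2}Z_{i,j}\F{\partial\phi}{\partial\Lambda}.
\]
Since the orthogonality relations $\int_{\partial\mathbb R_+^N}U_{x_i,\Lambda}^{2^\#-2}Z_{i,j}\phi=0$ hold for \emph{every} admissible $(r,\Lambda)$, differentiating them in $\Lambda$ gives $\int_{\partial\mathbb R_+^N}U_{x_i,\Lambda}^{2^\#-2}Z_{i,j}\,\partial_\Lambda\phi=-\int_{\partial\mathbb R_+^N}\partial_\Lambda\big(U_{x_i,\Lambda}^{2^\#-2}Z_{i,j}\big)\phi$, and the bounds $|\partial_\Lambda(U_{x_i,\Lambda}^{2^\#-2}Z_{i,j})|\le CU_{x_i,\Lambda}^{2^\#-1}$ already used in the preceding proposition reduce the right side to the integral estimated in (\ref{16}). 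Combined with $|c_j|\le C(1/\mu)^{m/2+\sigma}$ from Proposition~\ref{p1-6-3}, this term is $O\big(k/\mu^{m+2\sigma}\big)$, hence is absorbed into the claimed remainder.

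For the first term I would split $DI(W_{r,\Lambda}+\phi)[\partial_\Lambda W]=DI(W_{r,\Lambda})[\partial_\Lambda W]+\big\{DI(W_{r,\Lambda}+\phi)-DI(W_{r,\Lambda})\big\}[\partial_\Lambda W]$. The leading piece equals $\partial_\Lambda I(W_{r,\Lambda})$, which is obtained by differentiating the energy expansion of Proposition~\ref{pa2} term by term (using $L_0\le\Lambda\le L_1$ to keep all error terms uniform in $k$); this produces exactly $-B_1 m\Lambda^{-m-1}\mu^{-m}+\sum_{i\ge2}B_3(N-2)\Lambda^{-(N-2)}|x_1-x_j|^{-(N-2)}$ plus the stated error. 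For the remaining difference I would observe that $\partial_\nu(\partial_\Lambda W)=(2^\#-1)\sum_j U_{x_j,\Lambda}^{2^\#-2}Z_{j,2}$, so that $\int_{\mathbb R_+^N}D\phi\cdot D(\partial_\Lambda W)=(2^\#-1)\sum_j\int_{\partial\mathbb R_+^N}U_{x_j,\Lambda}^{2^\#-2}Z_{j,2}\phi=0$ by orthogonality; hence the difference collapses to $-\int_{\partial\mathbb R_+^N}K(|y|/\mu)\big[(W_{r,\Lambda}+\phi)^{2^\#-1}-W_{r,\Lambda}^{2^\#-1}\big]\partial_\Lambda W$. Writing $(W+\phi)^{2^\#-1}-W^{2^\#-1}=(2^\#-1)W^{2^\#-2}\phi+O(|\phi|^{2^\#-1})$ and using once more the orthogonality (to replace $K W^{2^\#-2}$ by $K W^{2^\#-2}-U_{x_j,\Lambda}^{2^\#-2}$ when paired against each $Z_{j,2}$), the interaction estimates of the appendix together with the pointwise bound $|K(|y|/\mu)-1|\le C\big||y|/\mu-r_0\big|^m$ and $\|\phi\|_*\le C(1/\mu)^{m/2+\sigma}$ bring this term down to $O\big(k/\mu^{m+\sigma}\big)$, which completes the expansion.

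The main obstacle is this last step: a crude estimate of $\big\{DI(W_{r,\Lambda}+\phi)-DI(W_{r,\Lambda})\big\}[\partial_\Lambda W]$ only yields a remainder weaker than $O(k/\mu^{m+\sigma})$, and one genuinely has to exploit \emph{both} orthogonality conditions --- first to kill $\int D\phi\cdot D(\partial_\Lambda W)$, then to kill the leading $\int W^{2^\#-2}\phi\,\partial_\Lambda W$ contribution --- before the decay of $K-1$ and of the cross terms can be cashed in against the weighted norms. Differentiating the appendix expansion is conceptually routine but must be carried out with explicit bookkeeping of how each error term depends on $k$ and $\mu$.
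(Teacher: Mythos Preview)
Your proposal is correct and follows essentially the same route as the paper: the same chain-rule decomposition, the same use of the equation for $W_{r,\Lambda}+\phi$ to convert $DI[\partial_\Lambda\phi]$ into $c_j$-terms handled by (\ref{16}), the same double use of orthogonality (first to kill $\int_{\mathbb R_+^N} D\phi\cdot D(\partial_\Lambda W)$, then to remove the leading piece of $\int_{\partial\mathbb R_+^N} K\,W^{2^\#-2}\phi\,\partial_\Lambda W$), and the same reduction to $\partial_\Lambda I(W_{r,\Lambda})$ plus lower-order corrections.

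The one point to sharpen is your computation of $\partial_\Lambda I(W_{r,\Lambda})$: you cannot literally obtain it by differentiating the \emph{statement} of Proposition~\ref{pa2} term by term, since the $O$-remainders there carry no information about their $\Lambda$-derivatives. The paper deals with this by proving a separate Proposition~\ref{pa3} in the appendix that recomputes $\partial_\Lambda I(W_{r,\Lambda})$ directly from the integrals; your closing remark about ``explicit bookkeeping'' suggests you are aware of this, but it should be stated as a separate estimate rather than as differentiation of an asymptotic formula. (Minor slip: the interaction term should carry $\Lambda^{-(N-1)}$, not $\Lambda^{-(N-2)}$.)
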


\begin{proof}
First we note from (\ref{16}) and Proposition \ref{p1-6-3} that
\begin{align}
&\ \frac{\partial F(r,\Lambda)}{\partial \Lambda}
= D I(W_{r,\Lambda}+\phi) \left(\frac{\partial W_{r,\Lambda}}{\partial \Lambda}+\frac{\partial \phi}{\partial \Lambda}\right) \nonumber \\
=&\ DI(W_{r,\Lambda}+\phi)\left( \frac{\partial W_{r,\Lambda}}{\partial \Lambda}
\right) +\sum_{j=1}^2\sum_{i=1}^kc_j\left\langle U^{2^\#-2}_{x_i,\Lambda} Z_{i,j},
\frac{\partial \phi}{\partial \Lambda}
\right\rangle \nonumber  \\
=&\ DI(W_{r,\Lambda}+\phi)\left( \frac{\partial W_{r,\Lambda}}{\partial \Lambda}
\right) + O\left(k\mu^{-m-\sigma}\right)\nonumber  \\
=&\ \F{\partial}{\partial\Lambda}I(W_{r,\Lambda})-\int_{\partial\mathbb R_+^N}K(\F{|y|}{\mu})\left[(W_{r,\Lambda}+\phi)^{2^\#-1}-W_{r,\Lambda}^{2^\#-1}\right]\frac{\partial W_{r,\Lambda}}{\partial \Lambda} \nonumber  \\
& + O\left(k\mu^{-m-\sigma}\right), \label{17}
\end{align}
because the orthogonality of $\phi$ implies
\BEN
\int_{\mathbb R_+^N} \nabla\phi\nabla \frac{\partial W_{r,\Lambda}}{\partial \Lambda}
=-\int_{\mathbb R_+^N}\phi\Delta\frac{\partial W_{r,\Lambda}}{\partial \Lambda}+\int_{\partial\mathbb R_+^N}\phi\F{\partial}{\partial\nu}\left(\frac{\partial W_{r,\Lambda}}{\partial \Lambda}\right)
=0.
\EEN
Next we will deal with the second term in the right side of (\ref{17}). It holds that
\begin{align*}
&\ \int_{\partial\mathbb R_+^N}K(\F{|y|}{\mu})\left[(W_{r,\Lambda}+\phi)^{2^\#-1}-W_{r,\Lambda}^{2^\#-1}\right]\frac{\partial W_{r,\Lambda}}{\partial \Lambda} \\
=&\ (2^\#-1)\int_{\partial\mathbb R_+^N}K(\F{|y|}{\mu})W_{r,\Lambda}^{2^\#-2}\frac{\partial W_{r,\Lambda}}{\partial \Lambda}\phi
+O\left(\int_{\partial\mathbb R_+^N}  W_{r,\Lambda}^{2^\#-2} |\phi|^{2} + |\phi|^{2^\#} \right) .
\end{align*}
For $\alpha=\F{N-2-m}{N-2}$, we know that in $\Omega_i$,
 \BEN
\sum_{j\neq i}\F{1}{(1+|\bar y-\bar x_j|)^{N-2}} \leq \F{1}{(1+|\bar
y-\bar x_i|)^{N-2-\alpha}} \sum_{j\neq i}\F{1}{|\bar x_j-\bar
x_i|^\alpha}, \EEN which leads to
\begin{gather*}
W_{r,\Lambda}^{2^\#-2} \leq \F{C}{(1+|\bar y-\bar x_i|)^{2-\F{2\alpha}{N-2}}}, \\
\sum_{j=1}^k\frac1{(1+|\bar y-\bar x_j|)^{\F{N}{2}-\F{m}{N-2}+\tau}}
\leq \F{C}{(1+|\bar y-\bar x_i|)^{\F{N}{2}-\F{m}{N-2}+\tau-\alpha}}.
\end{gather*}
As a result, we find that

\begin{align*}
\int_{\partial\mathbb R_+^N}  W_{r,\Lambda}^{2^\#-2} |\phi|^{2} &\leq C\|\phi\|^2_* \sum_{i=1}^k \int_{\Omega_i}\F{\mathrm d\bar y}{(1+|\bar y-\bar x_i|)^{2-\F{2\alpha}{N-2}+N-\F{2m}{N-2}+2\tau-2\alpha}}\\
&\leq C\|\phi\|^2_*\sum_{i=1}^k\int_{\Omega_i}\F{\mathrm d\bar y}{(1+|\bar y-\bar x_i|)^{N-1+\F{N^2-6N+2m+8}{(N-2)^2}+2\tau}} \\
&\leq Ck\mu^{-m-2\sigma}.
\end{align*}
A similar estimate also holds for $\int_{\partial\mathbb R_+^N}|\phi|^{2^\#}$ which is given by (\ref{13}).
Furthermore, from the orthogonality of $\phi$, we have that
\begin{align*}
&\ \int_{\partial\mathbb R_+^N}K(\F{|y|}{\mu})W_{r,\Lambda}^{2^\#-2}\frac{\partial W_{r,\Lambda}}{\partial \Lambda}\phi \\
=&\ \int_{\partial\mathbb R_+^N} K(\F{|y|}{\mu}) \left(W_{r,\Lambda}^{2^\#-2}\frac{\partial W_{r,\Lambda}}{\partial \Lambda} - \sum_{i=1}^k U_{x_i,\Lambda}^{2^\#-2} \F{\partial U_{x_i,\Lambda}}{\partial\Lambda}\right)\phi \\
& + \sum_{i=1}^k\int_{\partial\mathbb R_+^N} \left[K(\F{|y|}{\mu})-1\right] U_{x_i,\Lambda}^{2^\#-2} \F{\partial U_{x_i,\Lambda}}{\partial\Lambda}\phi\\
=&\ k\int_{\Omega_1} K(\F{|\bar y|}{\mu}) \left(W_{r,\Lambda}^{2^\#-2}\frac{\partial W_{r,\Lambda}}{\partial \Lambda} - \sum_{i=1}^k U_{x_i,\Lambda}^{2^\#-2} \F{\partial U_{x_i,\Lambda}}{\partial\Lambda}\right)\phi \\
& + k \int_{\partial\mathbb R_+^N} \left[K(\F{|y|}{\mu})-1\right]
U_{x_1,\Lambda}^{2^\#-2} \F{\partial
U_{x_1,\Lambda}}{\partial\Lambda}\phi.
\end{align*}
Thus we can check that
\begin{align*}
&\ \left|\int_{\Omega_1} K(\F{|\bar y|}{\mu}) \left(W_{r,\Lambda}^{2^\#-2}\frac{\partial W_{r,\Lambda}}{\partial \Lambda} - \sum_{i=1}^k U_{x_i,\Lambda}^{2^\#-2} \F{\partial U_{x_i,\Lambda}}{\partial\Lambda}\right)\phi\right| \\
\leq &\ C\int_{\Omega_1} \left( U_{x_1,\Lambda}^{2^\#-2}\sum_{i=2}^k U_{x_i,\Lambda}+\sum_{i=2}^k U_{x_i,\Lambda}^{2^\#-1}\right)|\phi|\\
\leq &\ C\left(\F{1}{\mu}\right)^{\F{Nm}{2(N-2)}+\F{m}{2}+\sigma} \leq C\left(\F{1}{\mu}\right)^{m+\sigma},
\end{align*}
and, using Lemma \ref{laa1},
\begin{align*}
&\ \left|\int_{\partial\mathbb R_+^N} \left[K(\F{| y|}{\mu})-1\right] U_{x_1,\Lambda}^{2^\#-2} \F{\partial U_{x_1,\Lambda}}{\partial\Lambda}\phi\right| \\
\leq &\ C\|\phi\|_*\F{1}{\mu^m}\int_{||\bar y|-\mu r_0|\leq \mu^{\frac{m}{N-2}}} \F{||\bar y|-\mu r_0|^m}{(1+|\bar y-\bar x_1|)^N}\sum_{i=1}^k \F{1}{(1+|\bar y-\bar x_i|)^{\F{N}{2}-\F{m}{N-2}+\tau}}\\
&+ C\|\phi\|_*\int_{||\bar y|-\mu r_0|\geq \mu^{\frac{m}{N-2}}}\F{1}{(1+|\bar y-\bar x_1|)^N}\sum_{i=1}^k \F{1}{(1+|\bar y-\bar x_i|)^{\F{N}{2}-\F{m}{N-2}+\tau}}\\
\leq & \F{C}{\mu^{m+\sigma}}.
\end{align*}
Thus we finish the proof from Proposition \ref{pa3}.
\end{proof}

\section{Proof of Theorem \ref{t2}} \label{s5}

Since
\[
|x_j-x_1|=2|x_1|\sin\frac{(j-1)\pi}k, \quad j=2,\dots, k,
\]
we have
\[
\begin{split}
&\sum_{j=2}^k \frac1{|x_j-x_1|^{N-2}}=\frac1{(2|x_1|)^{N-2}}\sum_{j=2}^k \frac1{(\sin\frac{(j-1)\pi}k)^{N-2}}\\
=&
\begin{cases}
\frac2{(2|x_1|)^{N-2}}\sum_{j=2}^{\frac k2}
\frac1{(\sin\frac{(j-1)\pi}k)^{N-2}} +\frac1{(2|x_1|)^{N-2}},
& \text{if $k$ is even};\\
\frac2{(2|x_1|)^{N-2}}\sum_{j=2}^{[\frac k2]}
\frac1{(\sin\frac{(j-1)\pi}k)^{N-2}}, &  \text{if $k$ is old}.
\end{cases}
\end{split}
\]
But
\[
0<c'\le \frac{\sin\frac{(j-1)\pi}k}{\frac{(j-1)\pi}k}\le c'',  \quad j=2,\cdots, \left[\frac k2\right].
\]
So, there is a constant $B_4>0$, such that
\[
\sum_{j=2}^k \frac1{|x_j-x_1|^{N-2}}=\frac{B_4k^{N-2}}{|x_1|^{N-2}}+O\Bigl(\frac k{|x_1|^{N-2}}\Bigr).
\]
Thus, we obtain
\[
\begin{split}
F(r,\Lambda)
=& k\Bigl( A +\frac{B_1}{\Lambda^{m}\mu^m}
+\frac{B_2}{\Lambda^{m-2}\mu^m}
(\mu r_0 -r)^2\\
&\quad-\frac{ B_3B_4 k^{N-2}}{\Lambda^{N-2}r^{N-2}}+
O\Bigl(\frac1{\mu^{m+\sigma}}+\frac{1}{\mu^m} |\mu r_0 -r|^3+\frac
k{r^{N-2}}\Bigr) \Bigr),
\end{split}
\]
and
\[
\begin{split}
&\frac{\partial F(r,\Lambda)}{\partial \Lambda}\\
=& k\left( -\frac{B_1 m }{\Lambda^{m+1}\mu^m} +\frac{ B_3B_4(N-2)
k^{N-2}}{\Lambda^{N-1}r^{N-2}} +
O\Bigl(\frac1{\mu^{m+\sigma}}+\frac{1}{\mu^m} |\mu r_0 -r|^2+\frac
k{r^{N-2}}\Bigr) \right).
\end{split}
\]\par

Let $\Lambda_0$ be the solution of
\[
-\frac{B_1 m }{\Lambda^{m+1}} +\frac{ B_3B_4(N-2)
}{\Lambda^{N-1}r_0^{N-2}}=0,
\]
that is
\[
\Lambda_0= \Bigl(\frac {B_3B_4(N-2)}{B_1 m
r_0^{N-2}}\Bigr)^{\frac1{N-2-m}}.
\]
Define
\[
D=\left\{ (r,\Lambda):  r\in \left[\mu
r_0-\frac1{\mu^{\bar\theta}},\mu
r_0+\frac1{\mu^{\bar\theta}}\right],\;\; \Lambda\in \left[\Lambda_0
-\frac1{\mu^{\frac32\bar\theta}},\Lambda_0
+\frac1{\mu^{\frac32\bar\theta}}\right] \right\},
\]
where $\bar\theta>0$ is a small constant.\par

\textit{The existence of a critical point  in D of $F(r,\Lambda)$
may be identically proved just as \cite[Prop. 3.3, Prop. 3.4]{WY}}.
We omit the details.\par

It remains to prove that the solution we found
$v_\mu=W_{r,\Lambda}+\phi$ is positive. Testing the equation to
$v_\mu$ (\ref{8}) against $v_\mu^-=\min\{v_\mu,0\}$ itself, it holds
that \BEN \int_{\mathbb R_+^N} |\nabla v_\mu^-|^2 =
\int_{\partial\mathbb R_+^N} K\big(\F{|y|}{\mu}\big)
(v_\mu^-)^{2^\#}. \EEN Moreover the trace theorem tells us that \BEN
\int_{\partial\mathbb R_+^N} K\big(\F{|y|}{\mu}\big)
(v_\mu^-)^{2^\#} \leq C \left(\int_{\mathbb R_+^N} |\nabla
v_\mu^-|^2 \right)^\F{2^\#}{2}. \EEN
 Combining the above two
inequalities, we get that \BE\label{9} \int_{\partial\mathbb R_+^N}
K\big(\F{|y|}{\mu}\big)(v_\mu^-)^{2^\#}  \geq C \qquad \text{or}
\qquad v_\mu^- \equiv 0 \quad \text{on }\partial\mathbb R_+^N.
\EE\par

On the other hand, we know that $|v_\mu^-|\leq |\phi|$ since
$W_{r,\Lambda}>0$. Thus, by (\ref{13}) it holds that \BEN
\int_{\partial\mathbb R_+^N} K\big(\F{|y|}{\mu}\big)(v_\mu^-)^{2^\#}
\leq C\int_{\partial\mathbb R_+^N}|\phi|^{2^\#} \leq
C\left(\F{1}{\mu}\right)^{2^\#(\frac{m}{2}+\sigma)} =o(1). \EEN On
account of (\ref{9}) again it must hold that $v_\mu^-\equiv 0$ on
$\partial\mathbb R_+^N$, which implies that \BEN v_\mu \geq 0 \qquad
\text{on }\partial\mathbb R_+^N. \EEN Therefore $v_\mu$ must be
positive because it is harmonic in $\mathbb R_+^N$.

\section{Appendix}\label{s6}

In all of the appendixes, we always assume that
\[
x_j=\left(r \cos\frac{2(j-1)\pi}k, r\sin\frac{2(j-1)\pi}k,0\right)\qquad j=1,\cdots,k,
\]
where $0$ is the zero vector in $\mathbb R^{N-2}$, and $r\in \left[r_0 \mu -\frac1{\mu^{\bar\theta}},r_0 \mu+\frac1{\mu^{\bar\theta}}\right]$ for
some small $\bar\theta>0$.

\subsection{Energy expansion of the approximate solution}

In this section, we will calculate $I(W_{r,\Lambda})$.\par

Let us recall that
\begin{align*}
\mu &= k^{\frac{N-2}{N-2-m}},\\
I(u) &=\frac{1}{2}\int_{\mathbb R_+^N} |Du|^2-\frac1{2^\#}\int_{\partial\mathbb R_+^N} K\bigl(\frac{|y|}\mu\bigr)|u|^{2^\#},\\
U_{x_j,\Lambda}(y)&=\bigl(N-2\bigr)^\frac{N-2}{2}\left[\frac{\Lambda }{(1+\Lambda y_N)^2+\Lambda^2|\bar y-\bar x_j|^2}\right]^\frac{N-2}{2}
\intertext{and}
W_{r,\Lambda}(y)&=(N-2)^\frac{N-2}{2}\sum_{j=1}^k \left[\frac{\Lambda }{(1+\Lambda y_N)^2+\Lambda^2|\bar y-\bar x_j|^2}\right]^\frac{N-2}{2}.
\end{align*}

\begin{proposition}\label{pa2}
 We have
\begin{align*}
I(W_{r,\Lambda})=& k\bigg[ A +\frac{B_1}{\Lambda^{m}\mu^m}
+\frac{B_2}{\Lambda^{m-2}\mu^m}
(\mu r_0 -r)^2\\
&-\sum_{i=2}^k\frac{ B_3 }{\Lambda^{N-2}|x_1-x_j|^{N-2}}+
O\Bigl(\frac1{\mu^{m+\sigma}}+\F{1}{\mu^m}|\mu r_0-r|^{2+\tilde\sigma}\Bigr) \bigg],
\end{align*}
where $A$, $B_i$ ($i=1,2,3$) are some positive constants only depending on $N$, $r=|x_1|$ and $\tilde\sigma=\min\{m-2,1\}$.
\end{proposition}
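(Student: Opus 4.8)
The plan is to evaluate the Dirichlet part and the curvature part of
$I(W_{r,\Lambda})=\F12\int_{\mathbb R_+^N}|DW_{r,\Lambda}|^2-\F1{2^\#}\int_{\partial\mathbb R_+^N}K(\F{|y|}\mu)W_{r,\Lambda}^{2^\#}$
separately, carrying all remainders so that the sum of the $k$ per-bubble errors is $O(k\mu^{-m-\sigma})$. Each bubble $U_{x_j,\Lambda}$ is harmonic in $\mathbb R_+^N$ and satisfies $\F{\partial U_{x_j,\Lambda}}{\partial\nu}=U_{x_j,\Lambda}^{2^\#-1}$ on $\partial\mathbb R_+^N$, so integrating by parts,
\[
\F12\int_{\mathbb R_+^N}|DW_{r,\Lambda}|^2
=\F12\sum_{i,j=1}^k\int_{\partial\mathbb R_+^N}U_{x_i,\Lambda}^{2^\#-1}U_{x_j,\Lambda}
=\F k2\int_{\partial\mathbb R_+^N}U_{0,\Lambda}^{2^\#}+\F12\sum_{i\ne j}\int_{\partial\mathbb R_+^N}U_{x_i,\Lambda}^{2^\#-1}U_{x_j,\Lambda},
\]
the diagonal terms coinciding because $\int_{\partial\mathbb R_+^N}U_{0,\Lambda}^{2^\#}$ is invariant under translations of $\bar x$ and under the parameter $\Lambda$.

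For the curvature part I would expand $W_{r,\Lambda}^{2^\#}=\sum_jU_{x_j,\Lambda}^{2^\#}+2^\#\sum_{i\ne j}U_{x_i,\Lambda}^{2^\#-1}U_{x_j,\Lambda}+\mathcal R$. Since $2^\#-2=\F2{N-2}<1$ for $N\ge5$, the remainder obeys $|\mathcal R|\le C\sum_{i\ne j}\min\{U_{x_i,\Lambda}^{2^\#-1}U_{x_j,\Lambda},\,U_{x_i,\Lambda}U_{x_j,\Lambda}^{2^\#-1}\}$, and using that the centres are mutually $c\mu/k$-separated together with the appendix integral estimates (Lemma~\ref{laa1}, Lemma~\ref{laa2}) one checks $\int_{\partial\mathbb R_+^N}K(\F{|y|}\mu)|\mathcal R|=O(k\mu^{-m-\sigma})$. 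Replacing $K$ by $1$ in the surviving interaction integrals costs another $O(k\mu^{-m-\sigma})$, and combining with the Dirichlet part gives
\[
I(W_{r,\Lambda})=k\Bigl(\F12-\F1{2^\#}\Bigr)\!\int_{\partial\mathbb R_+^N}\!\!U_{0,\Lambda}^{2^\#}
-\F1{2^\#}\sum_j\Bigl(\int_{\partial\mathbb R_+^N}K(\tfrac{|y|}\mu)U_{x_j,\Lambda}^{2^\#}-\int_{\partial\mathbb R_+^N}U_{x_j,\Lambda}^{2^\#}\Bigr)
-\F12\sum_{i\ne j}\int_{\partial\mathbb R_+^N}U_{x_i,\Lambda}^{2^\#-1}U_{x_j,\Lambda}+O\Bigl(\tfrac k{\mu^{m+\sigma}}\Bigr),
\]
so that $A=(\F12-\F1{2^\#})\int_{\partial\mathbb R_+^N}U_{0,\Lambda}^{2^\#}$, independent of $\Lambda$, and it remains to evaluate the interaction integral and the single-bubble curvature integral.

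For the interaction, rotational symmetry reduces $\sum_{i\ne j}$ to $k\sum_{j=2}^k\int_{\partial\mathbb R_+^N}U_{x_1,\Lambda}^{2^\#-1}U_{x_j,\Lambda}$; since $|x_1-x_j|\to\infty$, freezing $U_{x_j,\Lambda}$ at $\bar x_1$ on the bulk of $U_{x_1,\Lambda}^{2^\#-1}$ gives $\int_{\partial\mathbb R_+^N}U_{x_1,\Lambda}^{2^\#-1}U_{x_j,\Lambda}=\F{B_3'}{\Lambda^{N-2}|x_1-x_j|^{N-2}}+O\bigl(|x_1-x_j|^{-(N-2)-\epsilon}\bigr)$ for an explicit $B_3'>0$ and some $\epsilon>0$; the $\epsilon$-gain turns $k\sum_j$ of the remainder into an $O(k\mu^{-m-\sigma})$ term and leaves $-k\sum_{i\ge2}B_3/(\Lambda^{N-2}|x_1-x_i|^{N-2})$ with $B_3=B_3'/2$. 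For the curvature integral, symmetry gives $k\int_{\partial\mathbb R_+^N}K(\F{|y|}\mu)U_{x_1,\Lambda}^{2^\#}$; on $\bigl\{\,\bigl||\bar y|-\mu r_0\bigr|\ge\delta\mu\,\bigr\}$ only boundedness of $K$ is needed and the contribution is $O(k\mu^{-m-\sigma})$, while on the concentration region I would set $\bar y=\bar x_1+\bar z$ (with $\bar x_1=(r,0,\dots,0)$), use $|\bar y|-\mu r_0=z_1+(r-\mu r_0)+O(|\bar z|^2/\mu)$, insert hypothesis (\ref{K}), rescale $w=\Lambda\bar z$ so that $U_{x_1,\Lambda}^{2^\#}\,d\bar y$ becomes $(N-2)^{N-1}(1+|w|^2)^{-(N-1)}\,dw$, and Taylor-expand $|w_1+\Lambda(r-\mu r_0)|^m$ to second order: the zeroth-order term gives $B_1/(\Lambda^m\mu^m)$ with $B_1=\F{c_0(N-2)^{N-1}}{2^\#}\int|w_1|^m(1+|w|^2)^{-(N-1)}\,dw>0$, the first-order term vanishes by oddness in $w_1$, and the second-order term gives $B_2(\mu r_0-r)^2/(\Lambda^{m-2}\mu^m)$ with $B_2=\F{c_0m(m-1)(N-2)^{N-1}}{2\cdot2^\#}\int|w_1|^{m-2}(1+|w|^2)^{-(N-1)}\,dw>0$; the Taylor remainder, the $O(|\bar z|^2/\mu)$ sphere-curvature correction and the $O(\mu^{-m-\theta}|\cdots|^{m+\theta})$ tail of (\ref{K}) account for the residual $O(\mu^{-m-\sigma}+\mu^{-m}|\mu r_0-r|^{2+\tilde\sigma})$. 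Assembling these pieces yields the stated formula.

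The main obstacle I anticipate is precisely this error bookkeeping, uniformly in $k$: one must check that the decay exponents supplied by the appendix lemmas make each tail sum $\sum_j(\cdots)$ converge \emph{with a genuine power gain}, so that the overall factor $k$ out front does not spoil the $\mu^{-m-\sigma}$ order; in particular this is why the interaction integral (of the same order $\mu^{-m}$ per $k$ as the curvature term) has to be handled with an $\epsilon$-improved remainder rather than quoted as a black box. A secondary technical point is justifying the second-order Taylor expansion of $a\mapsto|w_1+a|^m$ near $w_1=0$ for the non-integer exponents $m\in[2,N-2)$ — this is exactly what pins the remainder to order $2+\tilde\sigma$, $\tilde\sigma=\min\{m-2,1\}$ — together with the convergence of the model integrals $\int|w_1|^{m-2}(1+|w|^2)^{-(N-1)}\,dw$, which holds since $m<N-2<N+1$.
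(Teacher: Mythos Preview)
Your proposal is correct and follows essentially the same route as the paper: integrate the Dirichlet term by parts to get bubble interactions, Taylor-expand $W_{r,\Lambda}^{2^\#}$, freeze the far bubble in the interaction integral to extract the $|x_1-x_j|^{-(N-2)}$ leading order, and on the single-bubble curvature integral use hypothesis~(\ref{K}) together with the second-order Taylor expansion of $|z_1+(r-\mu r_0)|^m$ after the radial linearisation $|\bar y|-\mu r_0=z_1+(r-\mu r_0)+O(|\bar z|^2/\mu)$. The only organisational difference is that the paper localises from the outset to the sector $\Omega_1$ and multiplies by $k$, whereas you write the expansion globally and then invoke rotational symmetry; the resulting estimates and constants coincide.
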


\begin{proof}
First let us calculate $\int_{\mathbb R^N } |D W_{r,\Lambda}|^2$. It is easy to get that, for $j=1,\cdots,k$,
\BE\label{a.1}
A_{N}:=\int_{\partial\mathbb R^N_+ } U_{x_j,\Lambda}^{2^\#} =(N-2)^{N-1} \int_{\mathbb R^{N-1}} \F{\mathrm dz}{(1+|z|^2)^{N-1}}.
\EE
By using the symmetry, we claim that
\begin{align}\label{a.2}
&\sum_{\substack{i,j=1 \\ i\neq j}}^k \int_{\partial \mathbb R_+^{N} } U_{x_i,\Lambda}^{2^\#-1}U_{x_j,\Lambda} = k \sum_{j=2}^k \int_{\mathbb R^{N-1} }U_{x_1,\Lambda}^{2^\#-1}U_{x_j,\Lambda}\nonumber\\
=&k \left[\sum_{j=2}^k \F{C_{3N}}{\Lambda^{N-2}|\bar x_1-\bar
x_j|^{N-2}}+O\bigg(\sum_{j=2}^k\F{\ln\Lambda|\bar x_i-\bar
x_1|}{\Lambda^{N-1}|\bar x_1-\bar x_j|^{N-1}}\bigg)\right],
\end{align}
where $C_{3N}=(N-2)^{N-1}\int_{\mathbb R^{N-1}}\F{\mathrm dz}{(1+|z|^2)^\F{N}{2}}$.
In fact, denote that $d_j=|\bar x_1-\bar x_j|$, then Taylor's expansion tells us that, in $B_\F{d_j}{2}(\bar x_1)\subset\partial\mathbb R_+^N= \mathbb R^{N-1}$ and for large $d_j$,
\begin{align}\label{a.10}
&\left(\F{1}{1+\Lambda^2|\bar y - \bar
x_j|^2}\right)^\F{N-2}{2}\nonumber\\
 =& \left(\F{1}{1+\Lambda^2|\bar
x_1 - \bar x_j|^2}\right)^\F{N-2}{2}
+O\left(\F{|\bar y-\bar x_1|}{\Lambda^{N-2}|\bar x_1-\bar x_j|^{N-1}}\right)\nonumber\\
=&\F{1}{\Lambda^{N-2}|\bar x_1 - \bar x_j|^{N-2}} + O\left(\F{|\bar
y-\bar x_1|}{\Lambda^{N-2}|\bar x_1-\bar x_j|^{N-1}}\right)
+O\left(\F{1}{\Lambda^N|\bar x_1 - \bar x_j|^N}\right).
\end{align}
Thus
\begin{align*}
\int_{B_\F{d_j}{2}(\bar x_1)}U_{x_1,\Lambda}^{2^\#-1}U_{x_j,\Lambda}=& \F{(N-2)^{N-1}}{\Lambda^{N-2}|\bar x_1-\bar x_j|^{N-2}}\int_{\mathbb R^{N-1}}\F{\mathrm dz}{(1+|z|^2)^\F{N}{2}}\\
&+O\left(\F{\ln\Lambda|\bar x_1-\bar x_j|}{\Lambda^{N-1}|\bar
x_1-\bar x_j|^{N-1}}\right).
\end{align*}
In $B_\F{d_j}{2}(\bar x_j)$, since $|\bar y-\bar x_1|\geq \F{|\bar
x_1-\bar x_j|}{2}$ and $|\bar y-\bar x_1|\geq |\bar y-\bar x_j|$, it
is easy to know that \BEN \left(\F{1}{1+\Lambda^2|\bar y - \bar
x_1|^2}\right)^\F{N}{2} \leq \left(\F{1}{1+\F{\Lambda^2}{4}|\bar x_1
- \bar x_j|^2}\right)^\F{N-1}{2}\left(\F{1}{1+\Lambda^2|\bar y -
\bar x_j|^2}\right)^\F{1}{2}, \EEN therefore we have \BEN
\int_{B_\F{d_j}{2}(\bar
x_j)}U_{x_1,\Lambda}^{2^\#-1}U_{x_j,\Lambda}=O\left(\F{\ln\Lambda|\bar
x_1-\bar x_j|}{\Lambda^{N-1}|\bar x_1-\bar x_j|^{N-1}}\right). \EEN
In $\mathbb R^{N-1}\setminus B_\F{d_j}{2}(\bar x_1)\cup
B_\F{d_j}{2}(\bar x_j)$, it holds that \BEN \int_{\mathbb
R^{N-1}\setminus B_\F{d_j}{2}(\bar x_1)\cup B_\F{d_j}{2}(\bar
x_j)}U_{x_1,\Lambda}^{2^\#-1}U_{x_j,\Lambda}
=O\left(\F{1}{\Lambda^{N-1}|\bar x_1-\bar x_j|^{N-1}}\right). \EEN
From (\ref{a.1}) and (\ref{a.2}), we finally obtain that
\begin{align}
&\int_{\mathbb R^N } |D W_{r,\Lambda}|^2= \sum_{j=1}^k \sum_{i=1}^k \int_{\partial\mathbb R^N_+ } U_{x_j,\Lambda}^{2^\#-1}U_{x_i,\Lambda} \nonumber\\
=&\ k \Bigl(\int_{\partial\mathbb R^N_+} U^{2^\#}_{0,1} +\sum_{j=2}^k \int_{\partial\mathbb R^N_+ } U_{x_1,\Lambda}^{2^\#-1}U_{x_j,\Lambda}\Bigr) \nonumber\\
=&\ k \left[A_{N} +\sum_{j=2}^k\frac{ C_{3N}}{\Lambda^{N-2}|\bar x_1- \bar x_j|^{N-2}}
+O\bigg(\sum_{j=2}^k\F{\ln\Lambda|\bar x_1-\bar x_j|}{\Lambda^{N-1}|\bar x_1-\bar x_j|^{N-1}}\bigg)\right]. \label{a.8}
\end{align}\par

Let
\[
\Omega_{j}=\left\{ \bar y:\; \bar y=(\bar y',\bar y'')\in \mathbb
R^2\times \mathbb R^{N-3}=\partial\mathbb R_+^N,\;
  \left\langle \frac {\bar y'}{|\bar y'|}, \frac{x_j}{|x_j|}\right\rangle \ge \cos\frac {\pi}k\right\}.
\]
Then, from Taylor's expansion we obtain that
\begin{align}
&\ \int_{\partial\mathbb R_+^N } K\bigl(\frac {|\bar y|}\mu\bigr) |W_{r,\Lambda}|^{2^\#}= k\int_{\Omega_1}K\bigl(\frac {|\bar y|}\mu\bigr) |W_{r,\Lambda}|^{2^\#} \nonumber\\
=&\ k\Bigg[\int_{\Omega_1}K\bigl(\frac {|\bar y|}\mu\bigr) U_{x_1,\Lambda}^{2^\#}
+2^\#\int_{\Omega_1}K\bigl(\frac {|y|}\mu\bigr)
\sum_{i=2}^k U_{x_1,\Lambda}^{2^\#-1}U_{x_i,\Lambda} \nonumber\\
& +O\left(\int_{\Omega_1} U_{x_1,\Lambda}^{2^\#-2}\Big( \sum_{i=2}^k
U_{x_i,\Lambda}\Big)^{2}\right)+O\left(\int_{\Omega_1}\Big(\sum_{i=2}^k
U_{x_i,\Lambda}\Big)^{2^\#}\right)\Bigg] \label{a.3}.
\end{align}
First, let us estimate the remainders. Note that for $\bar y\in
\Omega_1$, it holds that $|\bar y-\bar x_i|\geq |\bar y-\bar x_1|$
and $|\bar y-\bar x_i|\geq \F{1}{2}|\bar x_i-\bar x_1|$. Thus we
know, for any $0<\alpha<N-2$, that \BEN \sum_{i=2}^k
U_{x_i,\Lambda}\leq  \frac{C}{(1+|\bar y-\bar
x_1|)^{N-2-\alpha}}\sum_{i=2}^k \frac1{|\bar x_i-\bar x_1|^\alpha},
\EEN
 and it is not difficult to check, for any $\alpha >1$, that \BEN
\sum_{j=1}^k \F{1}{|\bar x_1-\bar x_j|^\alpha} = \sum_{j=1}^k
\F{1}{r^\alpha \sin^\alpha\F{(j-1)\pi}{k}} =
O\left(\Big(\F{k}{\mu}\Big)^\alpha\right)=O(\F{1}{\mu^\F{m\alpha}{N-2}}) \EEN If we select the
constant $\alpha$ with $\frac{(N-2)}{2}<\alpha=\F{m+\sigma}{m}\cdot\F{N-2}{2} <\F{N-1}{2}$ ($N\geq
5$), then
\begin{align}\label{a.4}
&\int_{\Omega_1} U_{x_1,\Lambda}^{2^\#-2}\Big(\sum_{i=2}^k U_{x_i,\Lambda}\Big)^{2}\nonumber\\
\leq& C \left(\F{k}{\mu}\right)^{2\alpha} \int_{\Omega_1}
\frac{1}{(1+|\bar y-\bar x_1|)^{2+2(N-2-\alpha)}} =O\left(
\frac 1 {\mu^{m+\sigma}}\right).
\end{align}
In addition, we may also choose $\alpha$ independently such that $\F{(N-2)^2}{2(N-1)}<\alpha=\F{m+\sigma}{m}\cdot\F{(N-2)^2}{2(N-1)}<\F{N-2}{2}$ ($N\geq 5$)
and then acquire that
\BEN
\int_{\Omega_1}\left(\sum_{i=2}^k U_{x_i,\Lambda}\right)^{2^\#} =O\left(\frac 1{\mu^{m+\sigma}}\right).
\EEN
Next we will calculate the second term in (\ref{a.3}). It is easy to show as in (\ref{a.2}) that
\begin{align}
&\int_{\Omega_1}K\bigl(\frac {|\bar y|}\mu\bigr)
\sum_{i=2}^k U_{x_1,\Lambda}^{2^\#-1}U_{x_i,\Lambda}   \nonumber\\
=& \int_{\Omega_1}
\sum_{i=2}^k U_{x_1,\Lambda}^{2^\#-1}U_{x_i,\Lambda}
+\int_{\Omega_1}\left(K\bigl(\frac {|\bar y|}\mu\bigr)-1\right)
\sum_{i=2}^k U_{x_1,\Lambda}^{2^\#-1}U_{x_i,\Lambda}   \nonumber\\
=&\sum_{i=2}^k\frac{ C_{3N} }{\Lambda^{N-2}|x_1-x_i|^{N-2}}+O\left(\frac 1{\mu^{m+\sigma}}\right). \label{a.5}
\end{align}
Finally the first term in (\ref{a.3})
\begin{align}
&\int_{\Omega_1}K\bigl(\frac {|\bar y|}\mu\bigr) U_{x_1,\Lambda}^{2^\#} = \int_{\{||\bar y|-\mu r_0|\leq \mu\delta\}\cap\Omega_1}K\bigl(\frac {|\bar y|}\mu\bigr) U_{x_1,\Lambda}^{2^\#} + O\left(\F{k^{N-1}}{\Lambda^{2N-2}\mu^{N-1}}\right)  \nonumber\\
\nonumber\\
=&\int_{\{|\bar y-\mu r_0|\leq \mu\delta\}\cap\Omega_1}  U_{x_1,\Lambda}^{2^\#}-\frac{c_0 }{\mu^m}\int_{\{||\bar y|-\mu r_0|\leq \mu\delta\}\cap\Omega_1}||\bar y|-\mu r_0|^mU_{x_1,\Lambda}^{2^\#}  \nonumber\\
\nonumber\\ & +O\left(\mu^{-m-\theta}\int_{\{||\bar y|-\mu r_0|\leq
\mu\delta\}\cap\Omega_1}||\bar y|-\mu
r_0|^{m+\theta}U_{x_1,\Lambda}^{2^\#}\right)
+ O\left(\F{k^{N-1}}{\Lambda^{2N-2}\mu^{N-1}}\right)  \nonumber\\
\nonumber\\
=&A_N-\frac{c_0 }{\mu^m}\int_{\partial\mathbb R_+^{N}} ||\bar y|-\mu
r_0|^m U_{x_1,\Lambda}^{2^\#} \mathrm d\bar y
+O\left(\frac1{\mu^{m+\theta}}\right)+ O\left(\F{k^{N-1}}{\Lambda^{2N-2}\mu^{N-1}}\right)  \nonumber\\
\nonumber\\
=&A_N -\frac{c_0 }{\mu^m}\int_{\partial\mathbb R_+^{N}} ||\bar
y-\bar x_1|-\mu r_0|^m U_{0,\Lambda}^{2^\#} \mathrm d\bar y
+O\left(\frac1{\mu^{m+\theta}}+ \F{k^{N-1}}{\mu^{N-1}}\right).
\label{a.6}
\end{align}
But
\newpage
\begin{align*}
 & \F{1}{\mu^m}\int_{\partial\mathbb R_+^{N}\setminus B_{\F{|\bar x_1|}{2}}(0)}||\bar y-\bar x_1|-\mu r_0|^m U_{0,\Lambda}^{2^\#}\mathrm d\bar y\\
  \leq &C \int_{\partial\mathbb R_+^{N}\setminus B_{\F{|\bar x_1|}{2}}(0)}\left(\F{|\bar y|^m}{\mu^m}+1\right)\F{\mathrm d\bar y}{(1+\Lambda^2|\bar
  y|^2)^{N-1}}\\
    \leq & \F{C}{\mu^{N-1}}.
\end{align*}
On the other hand, if $\bar y\in B_{\F{|\bar x_1|}{2}}(0)$, $\bar
y=(\bar y_1,\bar y^*)$, $\bar y^*=(\bar y_2,\cdots,\bar y_{N-1})$,
then $|\bar x_1|-\bar y_1 \geq \F{|\bar x_1|}{2}>0$. So, as $|\bar
x_1|$ becomes large, \BEN |\bar y-\bar x_1|=|\bar x_1|-\bar y_1 +
O\left(\F{|\bar y^*|^2}{|\bar x_1|-\bar y_1}\right)=|\bar x_1|-\bar
y_1+O\left(\F{|\bar y^*|^2}{|\bar x_1|}\right). \EEN As a result,
Taylor's expansion says, for $m\geq2$,  that
\begin{align*}
& ||\bar y-\bar x_1|-\mu r_0|^m= \left| |\bar x_1|-\bar y_1 +O(\F{|\bar y^*|^2}{|\bar x_1|})-\mu r_0\right|^m\\
=&\ |\bar y_1|^m + m|\bar y_1|^{m-2} \bar y_1 \left[\mu r_0 -|\bar x_1|+O(\F{|\bar y^*|^2}{|\bar x_1|})\right] \\
& + \frac12 m(m-1) |\bar y_1|^{m-2}\left[\mu r_0 -|\bar x_1|+O(\F{|\bar y^*|^2}{|\bar x_1|})\right]^2 \\
&  + O\left( |y_1|^{m-2-\tilde\sigma}\left|\mu r_0 -|\bar x_1|+O(\F{|\bar y^*|^2}{|\bar x_1|})\right|^{2+\tilde\sigma}\right) \qquad (\tilde\sigma=\min\{m-2,1\})\\
&  + O\left( \left|\mu r_0 -|\bar x_1|+O(\F{|\bar y^*|^2}{|\bar
x_1|})\right|^{m}\right).
\end{align*}
Thus, using \BEN \int_{B_\F{|\bar x_1|}{2}(0)}\F{|\bar y_1|^{m-2}
\bar y_1}{(1+\Lambda^2|\bar y|^2)^{N-1}}\mathrm d\bar y =0, \EEN we
obtain that, since $m<N-2$,
\begin{align}
& \F{1}{\mu^m}\int_{\partial \mathbb R_+^{N}}||y-x_1|-\mu r_0|^mU_{0,\Lambda}^{2^\#}\nonumber\\
= &\F{1}{\mu^m}\int_{B_\F{|\bar x_1|}{2}}||y-x_1|-\mu r_0|^mU_{0,\Lambda}^{2^\#} +O\left(\F{1}{\mu^{N-1}}\right)   \nonumber\\
=& \F{1}{\mu^m}\int_{\partial \mathbb R_+^{N}}|\bar y_1|^{m}U_{0,\Lambda}^{2^\#}\mathrm d\bar y
+\frac{ m(m-1)}{2\mu^m} \int_{\partial \mathbb R_+^{N}}|\bar y_1|^{m-2}(\mu r_0 -|x_1|)^2U_{0,\Lambda}^{2^\#} \mathrm d\bar y \nonumber\\
& +O\left(\F{1}{\mu^{m}}|\mu r_0-r|^{2+\tilde\sigma}+ \F{1}{\mu^{N-1}}\right)\nonumber \\
=&\ \F{C_{1N}}{\Lambda^m \mu^m}
+\frac{C_{2N}}{\Lambda^{m-2}\mu^m}(\mu r_0 -|x_1|)^2 +O\left(\F{1}{\mu^{m}}|\mu r_0-r|^{2+\tilde\sigma}+ \F{1}{\mu^{N-1}}\right), \label{a.7}
\end{align}
where
\[C_{1N}=(N-2)^{N-1}\int_{\mathbb R^{N-1}}\F{|\bar
y_1|^{m}\mathrm d\bar y}{(1+|\bar y|^2)^{N-1}}
\]
 and
 \[C_{2N}=\frac{
m(m-1)(N-2)^{N-1}}{2} \int_{\mathbb R^{N-1}}\F{|\bar
y_1|^{m-2}}{(1+|\bar y|^2)^{N-1}}\mathrm d\bar y. \]
 Thus, from
(\ref{a.3})--(\ref{a.7}) we have proved
\begin{align}
& \int_{\mathbb R^{N-1} } K\bigl(\frac {|\bar y|}\mu\bigr) |W_{r,\Lambda}|^{2^\#} \nonumber\\
=&\ k\Bigg[A_N -\F{C_{1N}}{\Lambda^m \mu^m} -\frac{C_{2N}}{\Lambda^{m-2}\mu^m}(\mu r_0 -|x_1|)^2 \nonumber\\
&+2^\#\sum_{i=2}^k\frac{ C_{3N} }{\Lambda^{N-2}|x_1-x_j|^{N-2}}+
O\Bigl(\F{1}{\mu^{m}}|\mu r_0-r|^{2+\tilde\sigma}+(\F{1}{\mu})^{m+\sigma}\Bigr)\Bigg].
\label{a.9}
\end{align}\par
The proposition is concluded from (\ref{a.8}) and (\ref{a.9}) by
setting $A=(\F{1}{2}-\F{1}{2^\#})A_N$, $B_1=\F{C_{1N}}{2^\#}$,
$B_2=\F{C_{2N}}{2^\#}$ and $B_3=\F{C_{3N}}{2}$.
\end{proof}

\begin{proposition}\label{pa3}
We have
\begin{align*}
\frac{\partial I(W_{r,\Lambda})}{\partial \Lambda}=& k\Bigl[
-\frac{m B_1}{\Lambda^{m+1}\mu^m}
+\sum_{i=2}^k\frac{ B_3 (N-2) }{\Lambda^{N-1}|x_1-x_j|^{N-2}}\\
&+O\bigl(\frac1{\mu^{m+\sigma}}+\frac{1}{\mu^m} |\mu r_0
-r|^2\bigr) \Bigr],
\end{align*}
where $B_i$ ($i=1,3$) are the same positive constants as in
Proposition \ref{pa2}.
\end{proposition}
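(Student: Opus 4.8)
The plan is to obtain the formula by differentiating in $\Lambda$, term by term, the energy expansion of Proposition~\ref{pa2}. Since $W_{r,\Lambda}$ is smooth in $\Lambda$ as a map into $D^{1,2}(\mathbb R_+^N)$ and $I$ is of class $C^1$, the chain rule gives $\frac{\partial I(W_{r,\Lambda})}{\partial\Lambda}=DI(W_{r,\Lambda})[\partial_\Lambda W_{r,\Lambda}]$; and, formally,
\[
\frac{\partial I(W_{r,\Lambda})}{\partial\Lambda}
=k\Bigl(-\frac{mB_1}{\Lambda^{m+1}\mu^m}-\frac{(m-2)B_2}{\Lambda^{m-1}\mu^m}(\mu r_0-r)^2
+\sum_{j=2}^k\frac{(N-2)B_3}{\Lambda^{N-1}|x_1-x_j|^{N-2}}+\cdots\Bigr),
\]
where the second explicit term is already of size $O(\mu^{-m}|\mu r_0-r|^2)$ and so is absorbed into the remainder. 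Thus the only real content of the proposition is to verify that the $O(\cdot)$ remainders of Proposition~\ref{pa2} keep the same order after $\partial_\Lambda$, i.e. that the pointwise estimates of Section~\ref{s6} survive when one bubble factor is differentiated in $\Lambda$.

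To do this honestly I would start from $\partial_\Lambda W_{r,\Lambda}=\sum_{i=1}^k Z_{i,2}$ and, using that $W_{r,\Lambda}$ is harmonic and each $U_{x_i,\Lambda}$ solves the Neumann condition with $K\equiv1$, write
\[
\frac{\partial I(W_{r,\Lambda})}{\partial\Lambda}
=\sum_{i,j=1}^k\int_{\partial\mathbb R_+^N}U_{x_i,\Lambda}^{2^\#-1}Z_{j,2}
-\int_{\partial\mathbb R_+^N}K\bigl(\tfrac{|y|}{\mu}\bigr)W_{r,\Lambda}^{2^\#-1}\sum_{j=1}^k Z_{j,2}.
\]
One then checks that the explicit part comes out as the termwise $\Lambda$-derivative of Proposition~\ref{pa2} (as it must), so the task reduces to rerunning the estimates in the proof of Proposition~\ref{pa2} with one factor $U_{x_i,\Lambda}$ replaced by $Z_{i,2}$ and checking that the errors do not grow. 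The decisive observation is the explicit formula for $Z_{i,2}$: it equals $U_{x_i,\Lambda}$ times $\frac{N-2}{2\Lambda}$ and a rational factor bounded by $1$ in modulus, so $|Z_{i,2}(y)|\le C\,U_{x_i,\Lambda}(y)$ uniformly for $\Lambda\in[L_0,L_1]$, with the same $(1+|y-x_i|)^{-(N-2)}$ decay. Hence all the cross-term bounds of Section~\ref{s6} (the $O(k\mu^{-m-\sigma})$ terms from the interaction of distinct bubbles, and the tails where $||\bar y|-\mu r_0|\ge\mu\delta$) carry over verbatim. For the leading diagonal contribution I would use the identity $\int_{\Omega_1}K(\tfrac{|\bar y|}{\mu})U_{x_1,\Lambda}^{2^\#-1}Z_{1,2}=\tfrac1{2^\#}\partial_\Lambda\!\int_{\Omega_1}K(\tfrac{|\bar y|}{\mu})U_{x_1,\Lambda}^{2^\#}$ (the domain $\Omega_1$ and the weight being $\Lambda$-free) and then copy the region split $\{||\bar y|-\mu r_0|\le\mu\delta\}$ versus its complement, the Taylor expansion of $K$ to order $m$ about $r_0$, and the odd-moment identity $\int|\bar y_1|^{m-2}\bar y_1(\cdots)=0$ of Proposition~\ref{pa2}; this produces the term $-\frac{mB_1}{\Lambda^{m+1}\mu^m}$ together with an error $O(\mu^{-m}|\mu r_0-r|^2)$. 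The interaction contributions, treated by the far-field expansion of (\ref{a.10})--(\ref{a.2}) and summed with $\sum_{j\ge2}|x_1-x_j|^{-\alpha}=O((k/\mu)^\alpha)$, produce the $\sum_{j=2}^k\frac{(N-2)B_3}{\Lambda^{N-1}|x_1-x_j|^{N-2}}$ term, with $B_1=C_{1N}/2^\#$ and $B_3=C_{3N}/2$ as in Proposition~\ref{pa2}.

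The main obstacle is precisely this control of the remainders under differentiation, and the single fact that makes it routine is the uniform domination $|Z_{i,2}|\le C\,U_{x_i,\Lambda}$ on $[L_0,L_1]$: no estimate of Section~\ref{s6} needs to be redone, and no cancellation beyond the one odd-moment identity already exploited there is required. (That differentiating the $\frac{B_2}{\Lambda^{m-2}\mu^m}(\mu r_0-r)^2$ term produces something of size $O(\mu^{-m}|\mu r_0-r|^2)$ -- slightly coarser than the $O(\mu^{-m}|\mu r_0-r|^{2+\tilde\sigma})$ of Proposition~\ref{pa2} -- is why the remainder in the present statement is recorded with exponent $2$.)
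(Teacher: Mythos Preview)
Your proposal is correct and follows essentially the same route as the paper: the paper too writes $\partial_\Lambda I(W_{r,\Lambda})$ as $\partial_\Lambda$ applied to the integral building blocks of Proposition~\ref{pa2} (see \eqref{a.12}), then reruns the estimates of that proof with one bubble factor differentiated, relying on the same pointwise fact $\left|\partial_\Lambda U_{x_j,\Lambda}\right|\le C\,U_{x_j,\Lambda}$ that you single out. The only cosmetic difference is that the paper redoes the interaction estimate \eqref{a.11}--\eqref{a.15} explicitly rather than invoking the domination principle, but the content is the same.
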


\begin{proof}
The proof of this proposition is similar to that of Proposition~\ref{pa2}. So we just sketch it.\par

It is not difficult to get \BE\label{a.12} \frac{\partial
I(W_{r,\Lambda})}{\partial\Lambda}= k\Biggl[\frac12 \sum_{i=2}^k
\F{\partial}{\partial\Lambda} \int_{\partial\mathbb
R_+^{N}}U_{x_1,\Lambda}^{2^\#-1} U_{x_i,\Lambda}
-\frac1{2^\#}\F{\partial}{\partial\Lambda} \int_{\Omega_1}
K\bigl(\frac {|\bar y|}\mu\bigr) W_{r,\Lambda}^{2^\#} \Biggr]. \EE
Note that
\[
\left.\F{\partial
U_{x_j,\Lambda}}{\partial\Lambda}\right|_{\partial\mathbb R_+^N}
=\F{(N-2)}{2\Lambda}\F{1-\Lambda^2|\bar y-\bar
x_j|^2}{1+\Lambda^2|\bar y-\bar x_j|^2}\left.
U_{x_j,\Lambda}\right|_{\partial\mathbb R_+^N},
\]
hence
\begin{align}\label{a.11}
\F{\partial}{\partial\Lambda}\int_{\partial\mathbb R^N_+} U_{x_1,\Lambda}^{2^\#-1}U_{x_{i,\Lambda}} = &\F{\partial}{\partial\Lambda}\int_{\partial\mathbb R^N_+} U_{x_1,\Lambda}^{2^\#-1}\F{(N-2)^\F{N-2}{2}}{\Lambda^\F{N-2}{2}|\bar x_i-\bar x_1|^{N-2}} \nonumber\\
&+\F{\partial}{\partial\Lambda}\int_{\partial\mathbb R^N_+}
U_{x_1,\Lambda}^{2^\#-1}
\left(U_{x_{i,\Lambda}}-\F{(N-2)^\F{N-2}{2}}{\Lambda^\F{N-2}{2}|\bar
x_i-\bar x_1|^{N-2}}\right).
\end{align}

In $B_\F{d_i}{2}(\bar x_1)$, recalling (\ref{a.10}) and using
\begin{align*}
&\F{\partial}{\partial\Lambda}\left(U_{x_{i,\Lambda}}-\F{(N-2)^\F{N-2}{2}}{\Lambda^\F{N-2}{2}|\bar x_i-\bar x_1|^{N-2}}\right)\\
=&O\left(\F{|\bar y-\bar x_1|}{\Lambda^\F{N}{2}|\bar x_i-\bar
x_1|^{N-1}}\right)+O\left(\F{1}{\Lambda^\F{N+4}{2}|\bar x_i-\bar
x_1|^{N}}\right),
\end{align*}
we have that
\BEN
\F{\partial}{\partial\Lambda}\int_{B_\F{d_i}{2}(\bar x_1)} U_{x_1,\Lambda}^{2^\#-1} \left(U_{x_{i,\Lambda}}-\F{(N-2)^\F{N-2}{2}}{\Lambda^\F{N-2}{2}|\bar x_i-\bar x_1|^{N-2}}\right)
=O\left(\F{\ln\Lambda|\bar x_i-\bar x_1|}{\Lambda^{N}|\bar x_i-\bar x_1|^{N-1}}\right).
\EEN
Similar as the proof of Proposition \ref{pa2}, it is also easy to check that
\begin{align*}
&\F{\partial}{\partial\Lambda}\int_{B_\F{d_i}{2}(\bar x_i)} U_{x_1,\Lambda}^{2^\#-1} \left(U_{x_{i,\Lambda}}-\F{(N-2)^\F{N-2}{2}}{\Lambda^\F{N-2}{2}|\bar x_i-\bar x_1|^{N-2}}\right)
=O\left(\F{\ln\Lambda|\bar x_i-\bar x_1|}{\Lambda^{N}|\bar x_i-\bar x_1|^{N-1}}\right),\\
&\F{\partial}{\partial\Lambda}\int_{\partial\mathbb R^N_+\setminus B_\F{d_i}{2}(\bar x_i)\cup B_\F{d_i}{2}(\bar x_1)} U_{x_1,\Lambda}^{2^\#-1} \left(U_{x_{i,\Lambda}}-\F{(N-2)^\F{N-2}{2}}{\Lambda^\F{N-2}{2}|\bar x_i-\bar x_1|^{N-2}}\right)\\
 = &O\left(\F{1}{\Lambda^{N}|\bar x_i-\bar x_1|^{N-1}}\right).
\end{align*}
Thus from (\ref{a.11}) we get that
\begin{align*}
&\F{\partial}{\partial\Lambda}\int_{\partial\mathbb R^N_+} U_{x_1,\Lambda}^{2^\#-1}U_{x_{i,\Lambda}} \\
=& \F{\partial}{\partial\Lambda}\int_{\partial\mathbb R^N_+}
U_{x_1,\Lambda}^{2^\#-1}\F{(N-2)^\F{N-2}{2}}{\Lambda^\F{N-2}{2}|\bar
x_i-\bar x_1|^{N-2}} +O\left(\F{\ln\Lambda|\bar x_i-\bar x_1|}{\Lambda^{N}|\bar x_i-\bar x_1|^{N-1}}\right) \\
= &-\F{(N-2)C_{3N}}{\Lambda^{N-1}|\bar x_i-\bar
x_1|^{N-2}}+O\left(\F{\ln\Lambda|\bar x_i-\bar
x_1|}{\Lambda^{N}|\bar x_i-\bar x_1|^{N-1}}\right).
\end{align*}

As for the terms in the right side of (\ref{a.12}), direct computations show that
\begin{align}
&~\F{\partial}{\partial\Lambda}\int_{\Omega_1}K\bigl(\frac {|\bar y|}\mu\bigr)
 U_{x_1,\Lambda}^{2^\#-1}U_{x_i,\Lambda}   \nonumber\\
=&~ \F{\partial}{\partial\Lambda}\int_{\Omega_1}
 U_{x_1,\Lambda}^{2^\#-1}U_{x_i,\Lambda}
+\F{\partial}{\partial\Lambda}\int_{\Omega_1}\left(K\bigl(\frac {|\bar y|}\mu\bigr)-1\right)
 U_{x_1,\Lambda}^{2^\#-1}U_{x_i,\Lambda}   \nonumber\\
=&~\F{\partial}{\partial\Lambda}\left(\int_{\partial\mathbb R^N_+}-\int_{\partial\mathbb R^N_+\setminus\Omega_1\cup B_\F{d_i}{2}(\bar x_i)}-\int_{ B_\F{d_i}{2}(\bar x_i)}\right)
 U_{x_1,\Lambda}^{2^\#-1}U_{x_i,\Lambda} \nonumber\\
& +\F{\partial}{\partial\Lambda}\int_{\Omega_1}\left(K\bigl(\frac
{|\bar y|}\mu\bigr)-1\right)
 U_{x_1,\Lambda}^{2^\#-1}U_{x_i,\Lambda} \nonumber\\
=&~-\F{(N-2)C_{3N}}{\Lambda^{N-1}|\bar x_i-\bar x_1|^{N-2}}+O\left(
\bigl(\frac k \mu\bigr)^{N-2+\sigma}\right). \label{a.15}
\end{align}
The last equality is due to that, because of the condition on the
function $K$,
\begin{align*}
&\ \F{\partial}{\partial\Lambda}\int_{\Omega_1}\left(K\bigl(\frac {|\bar y|}\mu\bigr)-1\right)
 U_{x_1,\Lambda}^{2^\#-1}U_{x_i,\Lambda}\\
 =&\ \F{\partial}{\partial\Lambda}\left(\int_{\Omega_1\cap \left\{\left||\bar y|-\mu r_0\right| \leq \mu^{1-\sigma}\right\}}+\int_{\Omega_1\cap \left\{\left||\bar y|-\mu r_0\right| \geq \mu^{1-\sigma}\right\}}\right)U_{x_1,\Lambda}^{2^\#-1}U_{x_i,\Lambda}\\
 = &\ O\left(\F{1}{\mu^{m\sigma}\Lambda^{N-1}|\bar x_i-\bar x_1|^{N-2}}\right)+O\left(\F{\ln\Lambda|\bar x_i-\bar x_1|}{\Lambda^{N}|\bar x_i-\bar x_1|^{N-1}}\right).
\end{align*}
By the similar estimates as in getting (\ref{a.6}) and (\ref{a.7}), we have that
\begin{align*}
&\F{\partial}{\partial\Lambda}\int_{\Omega_1}K\bigl(\frac {|\bar y|}\mu\bigr) U_{x_1,\Lambda}^{2^\#} = \F{\partial}{\partial\Lambda}\int_{\{||\bar y|-\mu r_0|\leq \mu\delta\}\cap\Omega_1}K\bigl(\frac {|\bar y|}\mu\bigr) U_{x_1,\Lambda}^{2^\#} + O\left(\F{k^{N-1}}{\mu^{N-1}}\right)  \nonumber\\
=&\F{\partial}{\partial\Lambda}\int_{\{|\bar y-\mu r_0|\leq \mu\delta\}\cap\Omega_1}  U_{x_1,\Lambda}^{2^\#}-\frac{c_0 }{\mu^m}\F{\partial}{\partial\Lambda}\int_{\{||\bar y|-\mu r_0|\leq \mu\delta\}\cap\Omega_1}||y|-\mu r_0|^mU_{x_1,\Lambda}^{2^\#}  \nonumber\\
&+O\left(\frac1{\mu^{m+\theta}}+ \F{k^{N-1}}{\mu^{N-1}}\right)  \nonumber\\
=&-\frac{c_0 }{\mu^m}\F{\partial}{\partial\Lambda}\int_{\mathbb R^{N-1}} ||\bar y|-\mu r_0|^m U_{x_1,\Lambda}^{2^\#} \mathrm d\bar y
+O\left(\frac1{\mu^{m+\sigma}}+ \F{k^{N-1}}{\mu^{N-1}}\right) \nonumber\\
=& -\frac{c_0 }{\mu^m}\F{\partial}{\partial\Lambda}\int_{\mathbb R^{N-1}}||\bar y-\bar x_1|-\mu r_0|^m U_{0,\Lambda}^{2^\#}\mathrm d\bar y
+O\left(\frac1{\mu^{m+\sigma}}+ \F{k^{N-1}}{\mu^{N-1}}\right). \nonumber\\
=&\ \frac{ mC_{1N} }{\Lambda^{m+1}\mu^m}
+O\left(\frac1{\mu^{m+\sigma}}+\F{1}{\mu^{m}}|\mu r_0-r|^{2}\right).
\end{align*}
The remaining estimates of this proposition are similar to the previous one. We omit the details.
\end{proof}

\subsection{Basic Estimates}

For each fixed $i$ and $j$, $i\ne j$, consider the following
function

\begin{equation}\label{aa1}
g_{ij}(y)= \frac{1}{(1+|y-x_j|)^{\alpha}}\frac{1}{(1+|y-x_i|)^{\beta}},
\end{equation}
where $\alpha>0$ and $\beta>0$ are two constants.

Then we have the following lemma whose proof can be found in
 Appendix~B in \cite{WY}.

\begin{lemma}\label{laa1}
For any constant $0\leq\sigma\le \min(\alpha,\beta)$, there is a
constant $C>0$, such that
\[
g_{ij}(y)\le \frac{C}{|x_i-x_j|^\sigma}\left[\frac{1}{(1+|y-x_i|)^{\alpha+\beta-\sigma}}+
\frac{1}{(1+|y-x_j|)^{\alpha+\beta-\sigma}}\right].
\]
\end{lemma}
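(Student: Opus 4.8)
The plan is to prove the pointwise bound by a simple dichotomy on which of the two centers $x_i,x_j$ is closer to $y$. The first observation I would make is that both $g_{ij}$ (as defined in \eqref{aa1}) and the claimed right-hand side are invariant under the simultaneous exchange $(i,\alpha)\leftrightarrow(j,\beta)$: swapping $i$ and $j$ together with $\alpha$ and $\beta$ leaves $g_{ij}$ unchanged, and the right-hand side is symmetric in $x_i,x_j$ because only the sum $\alpha+\beta$ appears there. Hence it suffices to prove, on the region $A_i:=\{y:\ |y-x_i|\le |y-x_j|\}$, the one-term estimate $g_{ij}(y)\le C|x_i-x_j|^{-\sigma}(1+|y-x_i|)^{-(\alpha+\beta-\sigma)}$; applying the same statement with the roles of $(i,\alpha)$ and $(j,\beta)$ interchanged covers the complementary region $A_j$, and adding the two bounds gives the lemma.

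On $A_i$ the triangle inequality yields $|x_i-x_j|\le |y-x_i|+|y-x_j|\le 2|y-x_j|$, so that $1+|y-x_j|\ge \frac{1}{2}|x_i-x_j|$, while the definition of $A_i$ gives $1+|y-x_j|\ge 1+|y-x_i|$. Now I would split the exponent as $\alpha=\sigma+(\alpha-\sigma)$ — which is exactly where the hypothesis $0\le\sigma\le\min(\alpha,\beta)$ enters, guaranteeing $\alpha-\sigma\ge 0$ — and estimate the factor centered at the \emph{far} point $x_j$ by
\[
\frac{1}{(1+|y-x_j|)^{\alpha}}=\frac{1}{(1+|y-x_j|)^{\sigma}}\cdot\frac{1}{(1+|y-x_j|)^{\alpha-\sigma}}\le \frac{2^{\sigma}}{|x_i-x_j|^{\sigma}}\cdot\frac{1}{(1+|y-x_i|)^{\alpha-\sigma}}.
\]
Multiplying by $(1+|y-x_i|)^{-\beta}$ merges the two $x_i$-factors into $(1+|y-x_i|)^{-(\alpha+\beta-\sigma)}$, giving the desired bound on $A_i$ with $C=2^{\sigma}$, uniformly in $i,j$ (the argument does not even require $|x_i-x_j|$ to be large).

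There is genuinely no analytic difficulty here; the only thing that requires care — and the single place the assumption $\sigma\le\min(\alpha,\beta)$ is used — is the bookkeeping of exponents: one must peel off precisely a $\sigma$-power from the factor attached to the point that is \emph{far} from $y$, so that the remaining power is nonnegative and can be absorbed into the (larger) factor attached to the nearby point. I would then simply note that repeating the computation on $A_j$ with $x_i\leftrightarrow x_j$ and $\alpha\leftrightarrow\beta$ produces the second term on the right-hand side, and taking the maximum of the two constants concludes the proof; this is exactly the reasoning of Appendix~B in \cite{WY}.
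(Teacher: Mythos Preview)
Your proof is correct and is precisely the standard dichotomy argument from Appendix~B of \cite{WY} that the paper itself defers to rather than reproducing. There is nothing to add: the split into the regions $A_i$ and $A_j$, the triangle-inequality bound $1+|y-x_j|\ge\tfrac12|x_i-x_j|$ on $A_i$, and the peeling of a $\sigma$-power from the far factor (which is exactly where $\sigma\le\min(\alpha,\beta)$ is used) constitute the full proof.
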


\begin{lemma}\label{laa2}
For any constant $0<\sigma<N-2$, there is a constant $C>0$, such
that for any $y \in \overline{\mathbb R_+^{N}}$,

\[
\int_{\partial\mathbb R_+^N}
\frac1{|y-z|^{N-2}}\frac1{(1+|z|)^{1+\sigma}}\,\mathrm d \bar z\le
\frac C{(1+|y|)^{\sigma}},
\]
where $z=(\bar z, 0)=\mathbb R^{N-1} \times \{0\}\in \partial\mathbb
R_+^N.$

\end{lemma}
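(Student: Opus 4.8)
The plan is to prove this as a standard weighted convolution estimate (of Riesz--potential type) over $\partial\mathbb R_+^N\cong\mathbb R^{N-1}$, using two features of the integrand: the kernel $|y-z|^{-(N-2)}$ has a locally integrable singularity, its exponent $N-2$ being strictly below the dimension $N-1$ of the domain of integration, while the weight $(1+|z|)^{-(1+\sigma)}$ is integrable at infinity on $\mathbb R^{N-1}$ (indeed $1+\sigma<N-1$ by the hypothesis $\sigma<N-2$). First I would dispose of the bounded range $|y|\le 2$: there the right-hand side $(1+|y|)^{-\sigma}$ is bounded below by a positive constant, and since $|y-z|\ge|\bar y-\bar z|$ the integral is dominated by $\int_{\mathbb R^{N-1}}|\bar y-\bar z|^{-(N-2)}(1+|\bar z|)^{-(1+\sigma)}\,\mathrm d\bar z$, which is finite for every $\bar y$ (integrable near $\bar z=\bar y$, and comparable to $|\bar z|^{-(N-1)-\sigma}$ at infinity) and continuous in $\bar y$, hence uniformly bounded for $|\bar y|\le 2$.

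For $|y|\ge 2$ the key step is a three-region decomposition of $\partial\mathbb R_+^N$ at the scale $|y|$: a near region $A_1=\{|z-y|\le|y|/2\}$, a middle annulus $A_2=\{|z-y|>|y|/2,\ |z|\le 2|y|\}$, and a far region $A_3=\{|z|\ge 2|y|\}$, which together cover $\partial\mathbb R_+^N$. On $A_1$ one has $|z|\ge|y|-|z-y|\ge|y|/2$, so the weight is $\le C|y|^{-(1+\sigma)}$, while $\int_{A_1}|y-z|^{-(N-2)}\,\mathrm d\bar z\le\int_{|\bar w|\le|y|/2}|\bar w|^{-(N-2)}\,\mathrm d\bar w\le C|y|$; the product is $\le C|y|^{-\sigma}$. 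On $A_2$ one has $|y-z|\ge|y|/2$, so the kernel is $\le C|y|^{-(N-2)}$, and $\int_{|z|\le 2|y|}(1+|\bar z|)^{-(1+\sigma)}\,\mathrm d\bar z\le C|y|^{N-2-\sigma}$; again the product is $\le C|y|^{-\sigma}$. On $A_3$ one has $|z-y|\ge|z|-|y|\ge|z|/2$, so the kernel is $\le C|\bar z|^{-(N-2)}$ and $\int_{|\bar z|\ge 2|y|}|\bar z|^{-(N-2)}(1+|\bar z|)^{-(1+\sigma)}\,\mathrm d\bar z$ equals, up to a constant, the tail integral $\int_{2|y|}^{\infty}\rho^{-1-\sigma}\,\mathrm d\rho\le C|y|^{-\sigma}$. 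Summing the three contributions gives the claim; only elementary radial integrations in $\mathbb R^{N-1}$ are needed, and the argument is self-contained.

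The only point that requires genuine care is the bookkeeping ensuring that each of the three pieces lands on \emph{exactly} the order $|y|^{-\sigma}$: this is what dictates the thresholds $|y|/2$ and $2|y|$, and the middle estimate $\int_{|z|\le 2|y|}(1+|\bar z|)^{-(1+\sigma)}\,\mathrm d\bar z\le C|y|^{N-2-\sigma}$ relies crucially on $\sigma<N-2$, since that is precisely what forces this integral to grow like a positive power of $|y|$ rather than saturate at a constant. The same balance also explains why the decay rate cannot be pushed beyond $\sigma$ — it is capped by the exponent $N-2$ of the kernel $|y-z|^{-(N-2)}$ — so the hypothesis $0<\sigma<N-2$ is the natural one.
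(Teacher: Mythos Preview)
Your argument is correct and self-contained. The paper itself does not supply a proof of this lemma; it simply cites Appendix~B of \cite{WY} for ``almost the same proof.'' Your three-region decomposition at scale $|y|$ is precisely the standard approach to such Riesz-type weighted convolution bounds, and it is what one finds in \cite{WY}. The bookkeeping is right: in particular your use of $|y-z|\ge|\bar y-\bar z|$ to handle the possibly off-boundary point $y$, and your observation that the middle region $A_2$ is where the hypothesis $\sigma<N-2$ is genuinely consumed, are both on target. Nothing to add.
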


The result is well known. Readers may refer to Appendix~B in
\cite{WY} to find almost the same proof.

\begin{lemma}\label{laa3}
Suppose that $N\ge 5$. Then  for any $y \in \overline{ \mathbb
R_+^{N}}$, we have that
\begin{align*}
 &\int_{\partial\mathbb R_+^N} \F{1}{|y-z|^{N-2}}W_{r,\Lambda}^{2^\#-2}(z) \sum_{j=1}^k\frac1{(1+|z-x_j|)^{\F{N-2}{2}-\F{m}{N-2}+\tau}}\mathrm d \bar z \\
\leq &\sum_{i=1}^k \F{C}{(1+|y-x_i|)^{\F{N-2}{2}-\F{m}{N-2}+\tau}}.
\end{align*}
\end{lemma}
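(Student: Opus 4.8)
Write $a:=\F{N-2}{2}-\F{m}{N-2}+\tau$. The plan is to reduce the estimate to the pointwise inequality
\[
W_{r,\Lambda}^{2^\#-2}(z)\sum_{\ell=1}^k\F{1}{(1+|z-x_\ell|)^{a}}\le C\sum_{i=1}^k\F{1}{(1+|z-x_i|)^{a+1}},\qquad z\in\partial\mathbb R_+^N,
\]
and then, given it, to multiply by $|y-z|^{-(N-2)}$, integrate over $\partial\mathbb R_+^N$, and apply Lemma~\ref{laa2} with parameter $\sigma=a$ to each of the $k$ resulting integrals, translating its center to $x_i$ (which keeps $y-x_i\in\overline{\mathbb R_+^N}$ since $(x_i)_N=0$). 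This last step is legitimate because $0<a<N-2$ for $N\ge5$: one has $a<\F{N-2}{2}<N-2$, and since $\F{m}{N-2}<1$ also $a>\F{N-2}{2}-1>0$. Summing over $i$ the estimates $\int_{\partial\mathbb R_+^N}|y-z|^{-(N-2)}(1+|z-x_i|)^{-(a+1)}\,\mathrm d\bar z\le C(1+|y-x_i|)^{-a}$ then gives the lemma.

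To prove the pointwise inequality I would first use $U_{x_j,\Lambda}(z)\le C(1+|z-x_j|)^{-(N-2)}$ on $\partial\mathbb R_+^N$ (with $C$ uniform over $\Lambda\in[L_0,L_1]$ and over $k$) together with the elementary inequality $(\sum_j b_j)^p\le\sum_j b_j^p$ for $0<p\le1$: since $2^\#-2=\F{2}{N-2}\le1$ for $N\ge4$, this gives $W_{r,\Lambda}^{2^\#-2}(z)\le C\sum_j(1+|z-x_j|)^{-2}$, so that the left-hand side is at most $C\sum_{j,\ell}(1+|z-x_j|)^{-2}(1+|z-x_\ell|)^{-a}$. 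The diagonal terms $j=\ell$ contribute $C\sum_i(1+|z-x_i|)^{-(a+2)}\le C\sum_i(1+|z-x_i|)^{-(a+1)}$. For $j\ne\ell$ I would pick $\sigma_0\in(\F{N-2-m}{N-2},\min(1,a)]$, an interval nonempty for $N\ge5$ because $a-\F{N-2-m}{N-2}=\F{N-4}{2}+\tau>0$ and $\F{N-2-m}{N-2}<1$; then $\sigma_0\le\min(2,a)$, so Lemma~\ref{laa1} (with $\alpha=2$, $\beta=a$) applies and yields
\[
\F{1}{(1+|z-x_j|)^{2}(1+|z-x_\ell|)^{a}}\le\F{C}{|x_j-x_\ell|^{\sigma_0}}\left[\F{1}{(1+|z-x_j|)^{2+a-\sigma_0}}+\F{1}{(1+|z-x_\ell|)^{2+a-\sigma_0}}\right].
\]
Summing over $j\ne\ell$ and using that, by the rotational symmetry of $\{x_i\}$ and $r\sim r_0\mu=r_0k^{(N-2)/(N-2-m)}$, one has $\sup_i\sum_{\ell\ne i}|x_i-x_\ell|^{-\sigma_0}=\sum_{\ell=2}^k|x_1-x_\ell|^{-\sigma_0}=O(k\,r^{-\sigma_0})=o(1)$ (the last equality because $\sigma_0>\F{N-2-m}{N-2}$), the off-diagonal part is $\le C\sum_i(1+|z-x_i|)^{-(2+a-\sigma_0)}\le C\sum_i(1+|z-x_i|)^{-(a+1)}$, the final inequality holding since $\sigma_0\le1$. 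Adding the two contributions proves the pointwise inequality.

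The one genuinely delicate point is this choice of $\sigma_0$, which must be $\le\min(2,a)$ for Lemma~\ref{laa1} to apply, $\le1$ so that $2+a-\sigma_0$ does not fall below $a+1$, and $>\F{N-2-m}{N-2}$ so that the lattice sum $\sum_{\ell\ne i}|x_i-x_\ell|^{-\sigma_0}$ stays bounded as $k\to\infty$; the standing hypotheses $N\ge5$ and $2\le m<N-2$ are exactly what keep this window nonempty and, at the same time, place $a$ in the range $(0,N-2)$ needed for the final use of Lemma~\ref{laa2}. The auxiliary facts invoked along the way — the uniformity of the constant in the bound for $U_{x_j,\Lambda}$, and the elementary estimate $\sum_{\ell=2}^k|x_1-x_\ell|^{-\sigma_0}=O(k\,r^{-\sigma_0})$, which is of the same type as those recorded in the appendix — are routine.
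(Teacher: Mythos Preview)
Your proof is correct. Both your argument and the paper's reduce the lemma to a pointwise bound of the form
\[
W_{r,\Lambda}^{2^\#-2}(z)\sum_{j=1}^k\frac{1}{(1+|z-x_j|)^{a}}\le C\sum_{i=1}^k\frac{1}{(1+|z-x_i|)^{a+1}}
\]
and then apply Lemma~\ref{laa2} termwise, but you reach the pointwise estimate by a genuinely different route. The paper localises: it partitions $\partial\mathbb R_+^N$ into the angular sectors $\Omega_\ell$, and in each $\Omega_\ell$ uses $|z-x_j|\ge|z-x_\ell|$ and $|z-x_j|\ge\tfrac12|x_j-x_\ell|$ to replace both $W_{r,\Lambda}^{2^\#-2}$ and the full sum by a single decaying factor centred at $x_\ell$, with a carefully chosen exponent $\alpha=\tfrac{N-2-m}{N-2}$. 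You instead work globally: the subadditivity $(\sum b_j)^p\le\sum b_j^p$ for $p=2^\#-2\le1$ turns $W_{r,\Lambda}^{2^\#-2}$ into a sum, the diagonal terms are immediate, and the off-diagonal ones are handled by Lemma~\ref{laa1} together with the lattice sum $\sum_{\ell\ne i}|x_i-x_\ell|^{-\sigma_0}=O(1)$ for $\sigma_0>\tfrac{N-2-m}{N-2}$. Your approach avoids the geometric decomposition entirely and makes the constraints on the auxiliary exponent ($\sigma_0>\tfrac{N-2-m}{N-2}$ for the lattice sum, $\sigma_0\le\min(1,a)$ for the exponents to fit) completely transparent; the paper's sector method, on the other hand, is the same device used repeatedly elsewhere in the paper, so it keeps the proof in line with the surrounding arguments. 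One small remark: your estimate $\sum_{\ell=2}^k|x_1-x_\ell|^{-\sigma_0}=O(k\,r^{-\sigma_0})$ as stated is only literally correct for $\sigma_0<1$ (at $\sigma_0=1$ a harmless $\ln k$ appears), but since the interval $(\tfrac{N-2-m}{N-2},\min(1,a))$ is nonempty you can simply take $\sigma_0<1$ and the issue does not arise.
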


\begin{proof}
Note that for any $\beta\geq \F{N-2-m}{N-2}$ and fixed $\ell$, as $k\to\infty$
\begin{align*}
&\sum_{i\ne \ell}\frac1{|x_i-x_\ell|^\beta}=\F{1}{2^\beta}\sum_{i\neq \ell} \F{1}{r^\beta \sin^\beta\F{|i-\ell|\pi}{k}} \\
\le &\frac{C k^\beta}{\mu^\beta}\sum_{i=1}^k \frac1{i^\beta}\leq
\begin{cases}
\F{Ck^\beta}{\mu^\beta}=O(\mu^{-\F{m\beta}{N-2}}) \qquad & \beta>1, \\
\F{Ck^\beta\ln k}{\mu^\beta}=O(\mu^{-\F{m\beta}{N-2}}\ln\mu) \quad & \beta=1, \\
\F{C k}{\mu^\beta}=O(\mu^{-(\beta-\F{N-2-m}{N-2})}) \quad & \beta<1.
\end{cases}
\end{align*}
In $\Omega_\ell$, we have $|z-x_j|=|\bar z-\bar x_j|\geq | z-
x_\ell|$ and $| z- x_j|\geq | x_j- x_\ell|$ for any $j\neq \ell$.
Thus for any $\F{N-2-m}{N-2}\leq\alpha\leq N-2$, it holds \BEN
\sum_{j\neq \ell}\F{1}{(1+| z- x_j|)^{N-2}} \leq \F{1}{(1+| z-
x_\ell|)^{N-2-\alpha}} \sum_{j\neq \ell}\F{1}{| x_j- x_\ell|^\alpha}
.. \EEN Thus in $\Omega_\ell$ we have
\begin{align*}
&W_{r,\Lambda}^{2^\#-2}(z) \leq \F{C}{(1+| z- x_\ell|)^{2-\F{2\alpha}{N-2}}},  \\
&\sum_{j=1}^k\frac1{(1+| z- x_j|)^{\F{N}{2}-\F{m}{N-2}+\tau}} \leq
\F{C}{(1+| z- x_\ell|)^{\F{N}{2}-\F{m}{N-2}+\tau-\alpha}}.
\end{align*}
As a result, we find for  $z\in\Omega_\ell$ that
\BEN
W_{r,\Lambda}^{2^\#-2}(z) \sum_{j=1}^k\frac1{(1+|z-x_j|)^{\F{N}{2}-\F{m}{N-2}+\tau}}
\leq \F{C}{(1+|z-x_\ell|)^{\F{N+2}{2}-\F{m}{N-2}+1-\F{N\alpha}{N-2}+\tau}}.
\EEN
It gives that, for $\alpha=\F{N-2-m}{N-2}$, since $\partial\mathbb R_+^N=\underset{i=1}{\overset{k}{\cup}}\Omega_i$,
\begin{align*}
 &\int_{\partial\mathbb R_+^N} \F{1}{|y-z|^{N-2}}W_{r,\Lambda}^{2^\#-2}(z) \sum_{j=1}^k\frac1{(1+|z-x_j|)^{\F{N}{2}-\F{m}{N-2}+\tau}} \mathrm d \bar z\\
 \leq& \sum_{i=1}^k \F{C}{(1+|y-x_i|)^{\F{N}{2}-\F{m}{N-2}+\F{(m-2)N+4}{(N-2)^2}+\tau}}\\
  \leq& \sum_{i=1}^k \F{C}{(1+|y-x_i|)^{\F{N}{2}-\F{m}{N-2}+\tau}}.
\end{align*}
\end{proof}

\end{document}